\newtheorem{thm}{Theorem}[section]
\newtheorem{ex}{Example}
\newtheorem{lemma}[thm]{Lemma}
\newtheorem{prop}[thm]{Proposition}
\newtheorem{corollary}[thm]{Corollary}
\newtheorem{defin}{Definition}[section]
\newtheorem*{question}{Question}
\DeclareMathOperator{\rank}{rk}
\DeclareMathOperator{\Ann}{Ann}
\DeclareMathOperator{\Hom}{Hom}
\title{Subdirect products of finitely presented metabelian groups}
\author{ J. R. J. Groves}
\begin{document}

\maketitle

\section{Introduction}

The initial motivation for the work presented here came after reading  \cite{metab}. That paper, in turn, was motivated by the work of Bridson {\it et al} on the finite presentation of subdirect products of limit groups. The paper \cite{metab} may be seen as showing that the cases for limit groups and for metabelian groups are very different.  Here we attempt to take further the argument for metabelian groups. Whereas it may be said that subdirect products of limit groups are only rarely finitely presented, we have attempted to show that, in some sense, subdirect products of finitely presented metabelian groups are almost always finitely presented. In fact, our approach demands that we consider a minor generalisation of subdirect product, the {\em virtual subdirect product}. We define it in the next section.

This attempt is largely, but not completely, successful. It is possible to make a statement along these lines under reasonably wide-ranging conditions (see Theorem A and the discussion following its proof). But in the exceptional cases where these conditions fail, so does both the letter and the spirit of the theorem. In these cases it is easy to guarantee that there will be many non-finitely presented subdirect products and it appears to be an open question as to whether, in every case,  there is even one finitely presented subdirect product. (See Theorem B and the Question in Section 2.) 

This work began in discussions with Chuck Miller. I thank him for these and for several later helpful discussions.

\section{Statement of results}

In the following, we denote the dual $\Hom(L, \mathbb R)$ of a group L by $L^*$.  Suppose that $G_1,\dots,G_n$ are finitely presented metabelian groups and $G$ is their direct product. Let $S$ be a subgroup of $G$. Then the inclusion of $S$ into $G$ induces a projection $G^*\longrightarrow S^*$; we shall denote the kernel of this by $S^{\circ}$. We shall say that $S$ is a {\em virtual subdirect product} of the $G_i$ if, for each $i$,  the natural projection of $G$ onto $G_i$ maps $S$ onto a subgroup of finite index in $G_i$.

We shall associate the subgroup $S$ of the $G_i$  with the subspace $S^{\circ}$ of $G^*$. We shall fix the codimension $k$ of this subspace; it is also the rank of the image of $S$ in the abelianisation of $G$. The subspaces of a fixed dimension of $G^*$ form the points of a  Grassmanian. (For more on Grassmanians and extra detail for this discussion, see the next section.) We show that the collection $\mathcal G_k^*$ of points in the Grassmanian which correspond to at least one virtual  subdirect product is the complement of a proper subvariety and so  itself has the structure of a quasi-projective variety (See Lemma \ref{vsp1}). Further, if two virtual subdirect products $S$ and $T$ correspond to the same point of $\mathcal G_k^*$ then one is finitely presented if and only if the other one is finitely presented  (See Lemma \ref{samefp}).

We are almost ready to present the main result of the paper. Observe that, if $m$ is the maximal rank of any of the abelianisations of the $G_i$ and if $S$  is a virtual subdirect product, then $k \ge m$. We shall henceforth assume this inequality.
\newtheorem*{thma}{Theorem A}
\begin{thma}
If either
\begin{enumerate}
\item $k\ge m+2$ or
\item the derived group of $G$ has finite exponent
\end{enumerate}
then those points of $\mathcal G_k^*$ which do not correspond to finitely presented groups belong to a proper subvariety of smaller dimension.
\end{thma}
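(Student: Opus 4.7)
The plan is to use the Bieri--Strebel $\Sigma$-invariant of the derived module of $S$ to characterize finite presentability, and then to translate the failure of finite presentability into an algebraic incidence condition on $S^\circ$ that cuts out a proper subvariety of $\mathcal{G}_k^*$.

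Since $S$ is a finitely generated metabelian group, the Bieri--Strebel theorem asserts that $S$ is finitely presented if and only if the complement $\Sigma(S')^c \subseteq S^*$ contains no antipodal pair $\{\chi,-\chi\}$. The derived module $S'$ is commensurable, as an $S$-module, with a submodule of $\bigoplus_i G_i'$, where each $G_i'$ is regarded as an $S$-module via the finite-index surjection $S\to G_i$ supplied by the virtual-subdirect-product hypothesis. The standard behaviour of $\Sigma$ under finite direct sums and commensurability then gives
\[
\Sigma(S')^c \;=\; \bigcup_i \overline{Q_i} \;\subset\; S^*,
\]
where $Q_i\subset G_i^*\subset G^*$ is the rational polyhedral $\Sigma$-complement of $G_i'$ (intrinsic to $G_i$) and $\overline{Q_i}$ denotes the image of $Q_i$ under $G^*\twoheadrightarrow G^*/S^\circ=S^*$. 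For any $S^\circ\in\mathcal{G}_k^*$ the virtual subdirect product condition forces $S^\circ\cap G_i^*=\{0\}$, so each $\overline{Q_i}$ is an isomorphic copy of $Q_i$; and since each $G_i$ is itself finitely presented, no single $Q_i$ contains an antipodal pair. Failure of finite presentability of $S$ therefore reduces to the existence, for some $i\ne j$, of elements $q_1\in Q_i\setminus\{0\}$ and $q_2\in Q_j$ with $q_1+q_2\in S^\circ$.

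This reformulation is an incidence condition on $S^\circ$. For each ordered pair $i\ne j$ I consider the constructible set
\[
\mathcal{I}_{ij} \;=\; \bigl\{(S^\circ,q_1,q_2)\in\mathcal{G}_k^*\times Q_i\times Q_j \,:\, q_1+q_2\in S^\circ,\ q_1\ne 0\bigr\},
\]
and let $B_{ij}\subset\mathcal{G}_k^*$ be its projection; the total bad locus is $B=\bigcup_{i\ne j}B_{ij}$. Under hypothesis~(1) each $Q_i$ is a finite union of rational cones, strictly convex (because $G_i$ is finitely presented), and of ambient dimension at most $m$; an incidence-and-projection calculation using these dimension bounds together with the constraint $S^\circ\cap G_i^*=0$ produces a proper subvariety of $\mathcal{G}_k^*$ of strictly smaller dimension containing each $B_{ij}$, with the threshold $k\ge m+2$ emerging from the combination of polyhedral rigidity and the requirement that \emph{both} $q_1$ and $q_2$ be nonzero. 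Under hypothesis~(2) the modules $G_i'$ are of bounded exponent and hence decompose as direct sums of $\mathbb{F}_p[G_i^{\mathrm{ab}}]$-modules for finitely many primes $p$; the corresponding $\Sigma$-complements lie in proper rational subspaces of the character space arising from the ideal structure of the finite characteristic group algebras, and this additional rigidity lets the same incidence argument succeed with the weaker hypothesis on $k$.

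The principal obstacle is the precise dimension calculation producing the threshold $m+2$: a naive incidence count would give only $k\ge 2m$, so one must carefully exploit the strict convexity of the cones in each $Q_i$, the fact that $S^\circ$ misses each $G_i^*$, and the positional constraint forcing $q_1,q_2$ to lie in distinct summands of $G^*$. A secondary technicality is the commensurability and finite-index bookkeeping needed to identify $\Sigma(S')^c$ with $\bigcup\overline{Q_i}$, but this is handled routinely using the standard invariance of $\Sigma$ under passage to submodules of finite index.
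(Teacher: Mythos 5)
Your reduction to the incidence condition ``there exist $q_1\in Q_i$, $q_2\in Q_j$ nonzero with $q_1+q_2\in S^\circ$'' is essentially correct and matches the paper's Lemma~\ref{fpvsp}, and the idea of projecting an incidence variety is a reasonable way to package the final step. But you stop exactly at the point where the real work happens, and the repair you gesture at does not exist. You write that the naive count gives only $k\ge 2m$ and then hope that strict convexity of the cones, the condition $S^\circ\cap G_i^*=0$, and the fact that $q_1,q_2$ lie in distinct $G_i^*$ summands will improve this to $k\ge m+2$. None of these improve a dimension count: strict convexity of a polyhedral piece does not lower its dimension, the condition $S^\circ\cap G_i^*=0$ is already built into $\mathcal G_k^*$ and still permits $S^\circ$ to meet a $2m$-dimensional $Q_i+Q_j$, and putting $q_1,q_2$ in distinct summands only says $Q_i+Q_j$ is genuinely a Minkowski sum rather than contained in one $G_i^*$, which if anything makes the set larger. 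So as written your argument establishes the conclusion only for $k\ge 2m$.

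The ingredient you are missing is a nontrivial upper bound on $\dim Q_i$ itself, and it does not come from polyhedral geometry but from commutative algebra: because each $G_i$ is finitely presented, $G_i'$ is tame, and tameness forces $G_i'\otimes G_i'$ to be finitely generated under the diagonal action (Bieri--Groves \cite{bgfg}); a transcendence-degree argument (Proposition~\ref{trdeg} in the paper) then shows the reduced annihilator ring of $G_i'$ has transcendence degree at most $\rank(G_i)/2$, so by Theorems~\ref{bg1} and~\ref{bg2} the polyhedron $Q_i=\Sigma^c_{G_i}(G_i')$ has dimension at most $\rank(G_i)/2+1\le m/2+1$. This is what makes $\dim\Gamma\le m+2$, after which the proper-subvariety conclusion is immediate from item~(c) of the Grassmannian facts and needs no incidence machinery. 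Your explanation of hypothesis~(2) has the same problem: the decomposition into $\mathbb F_p$-modules and ``proper rational subspaces'' is not the mechanism; the point is that in finite exponent the relevant $\Delta_v$ are already cones, so passing to $[\Delta_v]$ does not add one to the dimension, dropping $\dim Q_i$ to $\le m/2$ and hence $\dim\Gamma\le m$, so that the threshold $k\ge m$ (which always holds) suffices. In short: the dimension bound you need is a theorem about tame modules, not a fact about cone geometry, and without it the proof has a real gap.
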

The assumptions given here are not the only assumptions under which the conclusion can be derived. We discuss this briefly after the proof of the Theorem in Section 5.

It remains to consider what else can be said about the set of points of  $\mathcal G_k^*$ which correspond to finitely presented groups, particularly in the cases where the conclusion of Theorem A may not hold.

\newtheorem*{thmb}{Theorem B}
\begin{thmb} \begin{enumerate}
\item The set of points of $\mathcal G^*_k(\mathbb Q)$ which correspond to finitely presented groups is an open subset of $\mathcal G^*_k(\mathbb Q)$. 

\item Suppose that $n=2$ and that the abelianisations of  $G_1$ and $G_2$ have the same torsion-free rank $m$. If either $G_1\cong G_2$ or if $m\le2$ then there exists a point of $\mathcal G^*_m(\mathbb Q)$ which corresponds to a finitely presented group. (Thus the open set of the previous part of the theorem is non-empty).

\item If at least two $G_i$ are non-polycyclic and if $m \le k<\rank(G)$ then the set of points of $\mathcal G^*_k(\mathbb Q)$ which correspond to non-finitely presented virtual subdirect products is non-empty. 

\item The conclusion of Theorem A does not hold if and only if the set of points of $\mathcal G^*_k(\mathbb Q)$ which correspond to non-finitely presented groups  contains a non-empty open subset.
\end{enumerate}
\end{thmb}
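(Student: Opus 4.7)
The plan is to reduce parts (1) and (4) to openness-and-irreducibility arguments via the Bieri--Strebel finite-presentation criterion, and to handle (2) and (3) by explicit constructions driven by that same criterion. By Lemma~\ref{samefp}, finite presentability is a property of the point of $\mathcal G_k^*$, so we are free to choose any convenient representative $S$. The underlying technical tool is the Bieri--Strebel theorem: a finitely generated metabelian group $S$ is finitely presented iff its BNS-invariant complement $\Sigma^c(S)\subset S^*=G^*/S^\circ$ contains no antipodal pair of non-zero characters, and $\Sigma^c(S)$ can be described (as a finite union of rational polyhedral cones) in terms of the invariants $\Sigma^c(G_i)\subset G_i^*$ together with the subspace $S^\circ\subset G^*$.

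For part (1), the failure of tameness --- the existence of $\chi$ and $-\chi$ both represented in $\Sigma^c(S)$ --- translates into polynomial conditions in the coordinates of $S^\circ$, because each $\Sigma^c(G_i)$ is a finite union of rational polyhedral cones. Hence this failure is Zariski-closed on $\mathcal G_k^*$, and the finite presentation locus is open, as asserted.

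For part (2), when $G_1\cong G_2$ the graph $\{(g,\phi(g))\}$ of any isomorphism $\phi:G_1\to G_2$ is a virtual subdirect product isomorphic to $G_1$, hence finitely presented. When $m\le 2$, $S^*$ has dimension at most $2$ and so the sphere $S(S)$ is either empty, a pair of points, or a circle; a direct case analysis of how the low-complexity sets $\Sigma^c(G_i)$ can project into $\Sigma^c(S)$ via the quotient $G^*/S^\circ$ then exhibits, for a suitable $S^\circ$, a tame $\Sigma^c(S)$. For part (3), pick two non-polycyclic factors $G_i$ and $G_j$, so that both $\Sigma^c(G_i)$ and $\Sigma^c(G_j)$ are non-empty. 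Choose $\chi_1\in\Sigma^c(G_i)$ and $\chi_2\in\Sigma^c(G_j)$, regarded as characters on $G$ supported on the respective factors, and arrange that their images in $G^*/S^\circ$ are negatives of each other; this is a single linear condition on $S^\circ$ that, under the hypothesis $k<\rank(G)$, leaves enough degrees of freedom in $\mathcal G_k^*(\mathbb Q)$ to produce the required non-FP example.

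Part (4) is formal from (1) and the irreducibility of the Grassmannian: the non-FP locus is Zariski-closed, so it is either a proper subvariety of strictly smaller dimension (the conclusion of Theorem~A) or all of $\mathcal G_k^*$, in which case it contains a non-empty open subset trivially. I expect the main obstacle to be the case $m=2$ of part (2): here the tameness obstructions are of codimension comparable to that of $\mathcal G_k^*$ itself, so the genericity argument must be handled delicately rather than via a crude dimension count. A secondary difficulty is showing in (3) that the constructed locus of non-FP points is genuinely non-empty over $\mathbb Q$, which relies on the rational-polyhedral nature of the $\Sigma^c(G_i)$.
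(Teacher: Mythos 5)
The central error in your proposal is a confusion between the Zariski topology and the manifold topology on the Grassmannian, a distinction the paper is at pains to make explicit in Section~3. The complement $\Sigma^c$ and hence the Minkowski sum $\Gamma$ are finite unions of rational \emph{polyhedral cones}, not algebraic subsets. The condition that a subspace $W$ meet a closed polyhedral cone non-trivially is a closed condition in the manifold topology but is definitely \emph{not} Zariski-closed (for instance, the set of lines through the origin in $\mathbb R^2$ hitting the closed positive quadrant is a closed arc in the projective line, not a variety). So your Part~(1) argument, which asserts that failure of tameness ``translates into polynomial conditions'' and is ``Zariski-closed,'' is incorrect. The paper's actual proof of Part~(1) is an explicit $\epsilon$-estimate in the manifold topology: given $W$ with $W\cap\Gamma=\{0\}$, a compactness argument on the unit sphere of $W$ yields $\epsilon>0$ so that every sufficiently nearby $W'$ also avoids $\Gamma$.

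The same confusion makes your Part~(4) collapse. You use ``non-FP locus is Zariski-closed'' plus irreducibility of the Grassmannian to deduce a dichotomy ``proper subvariety of smaller dimension or everything.'' The premise is false, and the dichotomy itself is too coarse: the appendix example in the paper has an open set of finitely presented points \emph{and} an open set of non-finitely presented points, so the non-FP locus is neither a proper subvariety nor all of $\mathcal G_k^*$. The paper's proof of the nontrivial direction of Part~(4) instead argues directly: when $d=\dim\Gamma>k$, $\Gamma$ contains a $d$-dimensional polyhedral cone, and one exhibits an explicit open chart $\mathcal N_W$ (parametrised by matrices $\begin{bmatrix}I&A\end{bmatrix}$) in which every subspace with all entries of $A$ strictly positive contains a non-zero element of $\Gamma$, producing a non-empty open subset of non-FP points.

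Your Part~(3) is essentially the paper's argument (choose $\chi_1,\chi_2$ on two distinct non-polycyclic factors; $\chi_1+\chi_2\in\Gamma$ is non-zero; build $S^\circ$ containing it while still avoiding the $G_i^*$). Your Part~(2), $G_1\cong G_2$ case, via the graph of an isomorphism, is a clean reformulation of the paper's choice $A=-I$ and is fine. But the $m\le 2$ case is not a routine genericity argument: the paper uses Lemma~\ref{notall} to pick directions $v_1,v_2$ outside $\Sigma^c(G_1')$, $\Sigma^c(G_2')$ respectively, then applies a diagonal scaling $\rho(v_1)=\lambda^{-1}v_1$, $\rho(v_2)=\lambda v_2$ for $\lambda$ large enough, verifying by an angle computation that $\rho(\Delta_1)\cap\Delta_2=\{0\}$. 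That construction is entirely absent from your sketch, as you yourself acknowledge.
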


We have not been able to establish that the open set of points corresponding to finitely presented virtual subdirect products is non-empty in every case. Here is a more specific question relating to this problem.
\begin{question} Suppose that $G_1$ and $G_2$ are finitely presented metabelian groups and that $A$ is an abelian group isomorphic to both $(G_1)_{\text{ab}}$ and  $(G_2)_{\text{ab}}$. Do there always exist surjective homomorphisms $\phi_i:G_i \rightarrow A$ so that the fibre product of this pair of homomorphisms is finitely presented? \end{question}
Note that Theorem B implies that the rank of $A$ cannot be $2$.
\section{ Grassmanians}
Fix natural numbers $N\ge k$ and fix a vector space $V$ of dimension $N$.  The Grassmanian $\mathcal G(k, N)$ is an algebraic variety having points in one-to-one correspondence with the set of subspaces of $V$ having dimension $k$. Our main reference for Grassmanians is \cite{harris}; in particular, Lecture 6 and Example 11.42. 

There are two possible topologies that can be used with the (real points of) the Grassmanian; the Zariski topology and the topology derived from the natural manifold structure. When we use a topological term, such as `open', without qualification then we mean the second, or `manifold', topology. If we want to refer to the first topology, we append the name `Zariski'.

We need the following facts about Grassmanians. They can be deduced easily from the discussion in \cite{harris}.
\begin{equation}
\parbox{12cm}{\begin{enumerate} \label{Grass}
\item[a)]$\mathcal G(k,N)$ is a smooth irreducible variety of dimension $k(N-k)$;
\item[b)] the set of real points---$\mathcal G(k,N)(\mathbb R)$--- is a manifold;
\item[c)]  let $W$ be a fixed subspace of $V$ having dimension $m$; then the set $\{U\in \mathcal G(k,N): \dim(U\cap W) \ge l\}$ is a subvariety of $\mathcal G(k,N)$; it is proper (and so of smaller dimension) if $m+k-N<l$;
\item[d)] if $W$ is a subspace of dimension $N-k$ of $V$ then the set of complements of $W$ is a dense open subset of $\mathcal G(k,N)$ which we shall denote by $\mathcal N_W$; it can be parametrised by the set of all $k \times N$ row-echelon matrices of the form $\begin{bmatrix} I&A\end{bmatrix}$: here the last $N-k$ columns of the matrix correspond to a basis of $W$; the row-space of the matrix is the required element of $\mathcal G(k,N)$.
\end{enumerate}}
\end{equation}

Our aim is to investigate subspaces of $G^*$ having {\em co-dimension} $k$. For this we must consider rational points of $\mathcal G(N-k,N)$; we shall denote the variety  $\mathcal G(N-k,N)$ by $\mathcal  G_k$. Thus each point of $\mathcal G_k(\mathbb Q)$ corresponds to a rational subspace of $G^*$ and so is of the form $S^{\circ}$ for some subgroup $S$ of $G$. We shall (as above) call $k$ the {\em rank} of $S$; it can be seen alternatively as the torsion-free rank of the image of $S$ in the abelianisation of $G$. 

We show in Proposition \ref{vsp2} that those points of $\mathcal G_k(\mathbb Q)$ which do not correspond to any  virtual subdirect product of the $G_i$ must 
form the rational points of a proper subvariety of $\mathcal G_k$. The complement of this subvariety is a Zariski-open set and so defines a quasi-projective variety, which we shall denote by $\mathcal G^*_k$. Thus the points of $\mathcal G^*_k(\mathbb Q)$ are of the form $S^{\circ}$ for some virtual subdirect product $S$ of the $G_i$.

\section{Bieri-Strebel invariants}

Almost none of this section is new. We have gathered various results relating to the Bieri-Strebel invariant and put them in a form suitable for this exposition.

In this section, let $A$ be a finitely generated abelian group and let $M$ be a finitely generated $\mathbb ZA$-module. The set $A^*= \Hom(A,\mathbb R)$ is a real vector space of dimension equal to the torsion-free rank of $A$.  
The following geometric invariant  was introduced by Bieri and Strebel in \cite{bsval}.

 \begin{defin} The subset $\Sigma_A(M)$ of $A^*$ is defined as the set of $\chi\in A^*$ so that there exists an element $\alpha$ of the annihilator of $M$ in $\mathbb ZA$ of the form $1+\sum_a n_aa$ with $n_a\in \mathbb Z$ and $\chi(a)>0$. \end{defin}
 
 We have made some variations to the usage in \cite{bsval}. The set we denote by $\Sigma_A(M)$ is there denoted by $\Sigma_M(A)$ and the notation $\Sigma$ is  used there for the set of equivalence classes of such $\chi$ where two elements of $A^*$ are regarded as {\it equivalent} if one is a positive scalar multiple of the other. Thus, in \cite{bsval}, $\Sigma$ is naturally regarded as a subset of the $(n-1)$-sphere whereas here we shall regard it as a conical subset of $n$-dimensional Euclidean space. The practical differences are minor.
 
The structure of $\Sigma$ or, more usually, its complement $\Sigma^c$, is best derived from other subsets of $A^*$.

\begin{defin} Let $R$ denote $\mathbb ZA/\Ann_{\mathbb ZA}(M)$ with natural epimorphism $\sigma: \mathbb ZA\rightarrow R$. Let $v$ be a valuation of $\mathbb Z$. Then $\Delta_v(M)$ is defined to be the set of all $\chi\in A^*$ so that there is a valuation $w$ of $R$ so that the composite map $v\circ \sigma$ agrees with $v$ on $\mathbb Z$ and agrees with $\chi$ on $A$. \end{defin} 

The following is a consequence of Theorems A and B of \cite{bgberg}. By a {\it polyhedron} in Euclidean space  we mean a finite union of finite intersections of affine half-planes. A finite intersection of affine half-planes is a convex set  and its {\it dimension} is the dimension of the affine space it spans. In general, the dimension of  a polyhedron is the maximum of the dimensions of the convex polyhedra of which it is a union. Similarly, the transcendence degree of a commutative ring is the maximum of the transcendence degrees of its integral domain quotients.
 
 \begin{thm}[Bieri and Groves \cite{bgberg}] \label{bg1}The set $\Delta_v(M)$  is a polyhedron of dimension equal to the transcendence degree of $R$. \end{thm}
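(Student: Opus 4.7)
The plan is to combine a standard reduction to the integral-domain case with a valuation-theoretic (``tropical'') analysis of $\Delta_v(M)$. First, I would reduce to the case where $R$ is a domain. Every valuation $w$ on $R$ has support a prime ideal, which contains some minimal prime $\mathfrak p_i$, so $w$ factors through $R/\mathfrak p_i$ and hence
\[
\Delta_v(M) = \bigcup_{i=1}^{s} \Delta_v(R/\mathfrak p_i),
\]
where the $\mathfrak p_i$ are the minimal primes of $R$. Since $\td(R) = \max_i \td(R/\mathfrak p_i)$ and a finite union of polyhedra is a polyhedron whose dimension is the maximum of the component dimensions, it suffices to treat the case when $R$ is a domain.

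With $R$ a domain and fraction field $K$, I would next establish a combinatorial description: $\chi \in \Delta_v(M)$ if and only if for every $\alpha = \sum_a n_a a$ in $\Ann(M)$ with at least two non-zero terms, the minimum of $v(n_a) + \chi(a)$ over the support of $\alpha$ is attained at least twice. The ``only if'' direction is the ultrametric inequality applied to any valuation realising $\chi$; the ``if'' direction is the crux and requires constructing the weight valuation on $\mathbb Z A$ determined by $(v,\chi)$, showing that the bend condition for every $\alpha$ forces this weight function to descend to a bona fide valuation on $R$, and then extending it to $K$.

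Given this tropical characterisation, polyhedrality of $\Delta_v(M)$ should follow by Noetherian induction on $d := \td(R)$. A single $\alpha$ cuts out the codimension-one skeleton of the normal fan of its Newton polytope (shifted by $v$-values), which is polyhedral. The difficulty is that $\Ann(M)$ is infinite as a set, and one must show that only finitely many $\alpha$-conditions are genuinely needed; this is done by a Gr\"obner-style argument on a top-dimensional open stratum, with boundary strata handled by applying the induction hypothesis to $\Delta_v$ of the relevant initial ideals. For the dimension, the upper bound $\dim \Delta_v(M) \le d$ follows from Abhyankar's inequality, which caps the rational rank plus residue transcendence degree of any valuation of $K/\mathbb Z$ extending $v$ by $d$. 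The lower bound is constructive: taking a transcendence basis $y_1, \dots, y_d$ of $R$ over the prime subring, lexicographic weight valuations produce a $d$-parameter family of characters in $\Delta_v(M)$, certifying full dimension.

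The main obstacle is polyhedrality together with the matching lower bound. Polyhedrality requires extracting a uniform finite description from an a priori infinite family of bend conditions, and the lower bound requires showing that weight valuations built from a transcendence basis are not killed on passage from $\mathbb Z A$ to $R$. Managing this interplay between the Newton-polytope combinatorics of individual elements of $\Ann(M)$ and the algebraic structure of $\Ann(M)$ as a whole is the technical heart of \cite{bgberg}.
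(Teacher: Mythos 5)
This theorem is not proved in the paper: the author introduces it as ``a consequence of Theorems A and B of \cite{bgberg}'' and gives no argument, so there is nothing in the paper to compare your proposal against. What follows is an assessment on its own merits and against what Bieri and Groves actually do.

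Your reduction over minimal primes is correct and matches the paper's convention that $\td R=\max_i\td(R/\mathfrak p_i)$, and the Abhyankar upper bound and transcendence-basis lower bound for the dimension are reasonable in outline. The central step, however, is wrong as phrased. You say the bend condition ``forces this weight function to descend to a bona fide valuation on $R$''. The Gauss weight $\sum_a n_a a\mapsto\min_a\bigl(v(n_a)+\chi(a)\bigr)$ is indeed a valuation on $\mathbb Z A$, but it never descends to $R=\mathbb Z A/\Ann(M)$ once $\Ann(M)\neq 0$: any nonzero $\alpha\in\Ann(M)$ has finite Gauss weight, while its image in $R$ is $0$ and must receive value $\infty$ under any valuation of $R$. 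What the bend condition actually expresses is that $(v,\chi)$ lies in the tropicalisation of $\Ann(M)$; passing from that to an honest valuation of $R$ whose restriction to $\mathbb Z$ and $A$ is $(v,\chi)$ is the lifting half of the fundamental theorem of tropical geometry, which is nontrivial even over an algebraically closed valued field and needs extra care here because the base is $\mathbb Z$ --- with its several residue characteristics and the archimedean place --- rather than a field. Bieri and Groves, writing in 1984, do not argue tropically: their proof proceeds by direct valuation-theoretic constructions and Noether normalisation, exhibiting $\Delta_v(M)$ as the closure of a set of characters arising from discrete valuations (a fact this paper itself invokes just after Proposition~\ref{projsigma}), together with a separate finiteness theorem playing the structural role of your Gr\"obner stratification. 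Your sketch is a plausible modern reading, but the descent claim is false as written, and the genuine difficulty --- realising a tropical point by a valuation on $R$ over the non-field base $\mathbb Z$ --- is exactly the content of \cite{bgberg} that the sketch elides.
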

 
Observe that if we add to a polyhedron $\Lambda\subseteq A^*$ all elements equivalent to one of its elements then we finish up with a conical polyhedron $[\Lambda]$ which has dimension at most one more than the original dimension. 
 
 \begin{thm}[Bieri and Groves \cite{bgberg}] \label{bg2}The set $\Sigma^c(M)$ is the union of finitely many sets of the form $[\Delta_v(M)]$ where $v$ is a valuation of $\mathbb Z$. \end{thm}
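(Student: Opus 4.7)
The plan is to prove the two inclusions $\bigcup_v [\Delta_v(M)] \subseteq \Sigma^c(M)$ and $\Sigma^c(M) \subseteq \bigcup_v [\Delta_v(M)]$ and then argue that only finitely many valuations $v$ of $\mathbb Z$ are needed. The first (easy) inclusion goes by a non-Archimedean triangle inequality: suppose $\chi = c\chi'$ with $c > 0$ and $\chi' \in \Delta_v(M)$, realised by a valuation $w$ of $R$ so that $w\circ \sigma$ agrees with $v$ on $\mathbb Z$ and with $\chi'$ on $A$. If $\chi$ lay in $\Sigma_A(M)$, some $\alpha = 1 + \sum n_a a \in \Ann_{\mathbb ZA}(M)$ would satisfy $\chi(a)>0$ on its support; then $\sigma(\alpha)=0$ forces $w(\sigma(\alpha))=\infty$, whereas $w(\sigma(1))=0$ and each $w(\sigma(n_a a))=v(n_a)+\chi'(a)>0$ (since $v(n_a)\ge 0$ and $\chi'(a)>0$). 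The non-Archimedean triangle inequality then gives $w(\sigma(\alpha))=0$, a contradiction.

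The converse inclusion is the main content. Given $\chi \notin \Sigma_A(M)$, I would first rewrite the hypothesis as: in the subring $R_\chi := \mathbb Z[\sigma(A_{\chi\ge 0})]$ of $R$, the unit $1$ lies outside the ideal $I$ generated by $\sigma(A_{\chi>0})$. After reducing to the case that $R$ is a domain (by passing to a minimal prime of maximal transcendence degree, which only enlarges the annihilator and hence only shrinks $\Sigma$), pick a prime $\mathfrak p$ of $R_\chi$ containing $I$. The standard theorem on dominating valuation rings produces a valuation $w_0$ on $\mathrm{Frac}(R)$ whose valuation ring dominates the localisation $(R_\chi)_{\mathfrak p}$, so $w_0(\sigma(a))>0$ whenever $\chi(a)>0$. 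To get a real-valued valuation, let $\Gamma_0$ be the largest convex subgroup of the value group of $w_0$ that has trivial intersection with $w_0\circ\sigma(A_{\chi>0})$; the quotient is Archimedean, hence embeds in $\mathbb R$, so the composed valuation $w$ is real-valued, its restriction to $\mathbb Z$ is some valuation $v$, and its restriction to $A$ is a positive scalar multiple of $\chi$, establishing $\chi \in [\Delta_v(M)]$.

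For finiteness, invoke Theorem \ref{bg1}: each $\Delta_v(M)$ is a polyhedron of dimension $\td(R)$. The valuations of $\mathbb Z$ are the trivial valuation and the $p$-adic ones, and for all but finitely many primes $p$ (roughly, those that become zero-divisors on some minimal-prime quotient of $R$), the only extensions are compositions of the trivial valuation on the integral quotients of $R$; hence $\Delta_{v_p}(M) \subseteq \Delta_{\text{triv}}(M)$ and can be absorbed. The main obstacle is clearly the hard direction: extracting from the prime ideal a real-valued valuation whose restriction to $A$ is exactly (up to positive scaling) the given $\chi$, rather than some other character with matching sign pattern. The convex-subgroup truncation accomplishes this, but only after careful bookkeeping, and one may need to iterate along a maximal chain of primes inside the valuation ring to eliminate higher-rank contributions. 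Once this is in place, the polyhedral rigidity of Theorem \ref{bg1} makes the finiteness statement essentially routine.
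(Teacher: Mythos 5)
The paper does not actually prove this statement; it is quoted as ``a consequence of Theorems A and B of \cite{bgberg}'' and no argument is given, so there is no in-paper proof to compare against. What you have written is a blind attempt to reconstruct the Bieri--Groves argument, and while its overall shape is reasonable (the valuation characterisation of $\Sigma^c$ plus a finiteness statement coming from the polyhedrality in Theorem~\ref{bg1}), there are concrete gaps beyond the ones you flag yourself.

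First, the justification for passing to a domain is reversed: enlarging $\Ann(M)$ to a minimal prime $J$ provides \emph{more} elements of the form $1+\sum n_a a$ in the annihilator, so it \emph{enlarges} $\Sigma_A$ and \emph{shrinks} $\Sigma^c_A$; as stated, your reduction would lose the hypothesis $\chi\in\Sigma^c(M)$. The reduction is still legitimate, but it must be routed through the identity $\Sigma^c(M)=\bigcap_J\Sigma^c(\mathbb ZA/J)$ over the minimal primes $J$ (the identity the paper uses in the proof of Proposition~\ref{trdeg}), which you do not invoke. Second, and more seriously, the truncation step is asserted rather than argued: taking the largest convex subgroup $\Gamma_0$ of the value group of $w_0$ disjoint from $w_0\circ\sigma(A_{\chi>0})$ does \emph{not} in general produce an Archimedean quotient. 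If, say, the value group is $\mathbb Z\times\mathbb Z$ ordered lexicographically and the image of $A_{\chi>0}$ contains both $(0,1)$ and $(1,0)$, then $\Gamma_0=0$ and the quotient still has rank~2. Your closing remark that ``one may need to iterate along a maximal chain of primes inside the valuation ring'' points at the right repair, but that iteration is precisely where the content of the Bieri--Strebel/Bieri--Groves construction lives, and it is not supplied. Finally, the finiteness claim (only finitely many $v$ contribute) is stated as a heuristic about primes becoming zero-divisors and would need to be turned into an actual argument; it does not follow merely from the polyhedrality in Theorem~\ref{bg1}, which bounds the dimension of each $\Delta_v(M)$ but not the number of $v$'s needed.
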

 
 Thus, if $d$ is the transcendence degree of $R$, then $\Sigma^c(M)$ is a conical polyhedron of dimension either $d$ or $d+1$. The former will happen when each  $\Delta_v(M)$ used in making $\Sigma^c(M)$ is already a cone.
 
 If $B\le A$ then $M$ can also be considered as a $\mathbb ZB$-module; in this case, we denote it by $M_B$. The next result follows by combining Proposition 1.2 of \cite{bsgeomnilp} with the fact, clear from Theorem A of \cite{bgberg}, that $\Sigma^c$ is the closure of its discrete points.
 
 \begin{prop} \label{projsigma}Let $\iota$ denote the embedding of $B$ in $A$ and $\iota^*: A^*\rightarrow B^*$ the restriction map. Then $\iota^*$ induces a surjective map from $\Sigma^c(M)$ onto $\Sigma^c(M_B)$. \end{prop}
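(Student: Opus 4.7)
The plan is to prove the two inclusions $\iota^*(\Sigma^c(M))\subseteq \Sigma^c(M_B)$ and $\iota^*(\Sigma^c(M))\supseteq \Sigma^c(M_B)$ separately, the first being essentially tautological from the definition of $\Sigma$, and the second using the two ingredients the paper advertises: Proposition 1.2 of \cite{bsgeomnilp} applied pointwise, together with the density of discrete points in $\Sigma^c$.

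For the containment $\iota^*(\Sigma^c(M))\subseteq \Sigma^c(M_B)$, I would argue by contrapositive. Suppose $\chi\in A^*$ satisfies $\iota^*\chi\in \Sigma(M_B)$. Then by definition there is an element $\alpha = 1+\sum n_b b \in \Ann_{\mathbb ZB}(M_B)$ with $\chi(b)=\iota^*\chi(b)>0$ for each $b$ in the support. The key observation is that $\Ann_{\mathbb ZB}(M_B) = \Ann_{\mathbb ZA}(M)\cap \mathbb ZB$ (both describe the $\beta\in \mathbb ZB$ annihilating $M$), so the same $\alpha$ lies in $\Ann_{\mathbb ZA}(M)$ and witnesses $\chi\in \Sigma(M)$. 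Hence $\chi\in \Sigma^c(M)$ forces $\iota^*\chi\in \Sigma^c(M_B)$.

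For the surjectivity, my plan is to combine two facts. First, by Theorem \ref{bg2}, $\Sigma^c(M)$ is a finite union of polyhedral cones in $A^*$, and the linear map $\iota^*$ sends each polyhedral cone to a polyhedral cone, so $\iota^*(\Sigma^c(M))$ is itself a finite union of polyhedral cones and in particular is closed in $B^*$. Second, by the density statement the paper invokes (coming from Theorem A of \cite{bgberg}), $\Sigma^c(M_B)$ equals the closure of its discrete points. Combining these, it is enough to show that every discrete point $\psi$ of $\Sigma^c(M_B)$ is of the form $\iota^*\chi$ for some $\chi\in \Sigma^c(M)$.

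The last step is precisely the content of Proposition 1.2 of \cite{bsgeomnilp}: a discrete point $\psi\in \Sigma^c(M_B)$ is realised by a valuation $w$ of $R_B:=\mathbb ZB/\Ann_{\mathbb ZB}(M_B)$ extending a valuation $v$ of $\mathbb Z$, and the cited proposition extends $w$ to a valuation $W$ on the larger ring $R=\mathbb ZA/\Ann_{\mathbb ZA}(M)$ (in which $R_B$ embeds), producing a character $\chi\in\Delta_v(M)\subseteq \Sigma^c(M)$ with $\iota^*\chi=\psi$. The real substance of the argument therefore lies in this valuation-extension step, which is what I would expect to be the main obstacle if one had to reprove it from scratch rather than cite it; once it is in hand the density/closedness package delivers the surjectivity automatically.
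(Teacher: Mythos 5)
Your proof is correct and follows essentially the same route as the paper, which only sketches the argument by citing Proposition 1.2 of \cite{bsgeomnilp} together with the density of discrete points in $\Sigma^c$. You have sensibly filled in the supporting details the paper leaves implicit --- the tautological containment $\iota^*(\Sigma^c(M))\subseteq \Sigma^c(M_B)$ via $\Ann_{\mathbb ZB}(M_B)=\Ann_{\mathbb ZA}(M)\cap\mathbb ZB$, and the closedness of $\iota^*(\Sigma^c(M))$ (as a finite union of linear images of polyhedral cones) needed to upgrade containment of the discrete points to the full inclusion.
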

 
 The next result appears as  \cite[Corollary 4.5]{bsgeomab}.
 \begin{prop} [Bieri and Strebel] \label{modfg}$M$ is finitely generated as $\mathbb ZB$-module if and only if the kernel $B^{\circ}$ of $ \iota^* : A^* \rightarrow B^*$ has zero intersection with $\Sigma^c(M)$. \end{prop}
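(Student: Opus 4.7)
The two implications require different techniques. The forward direction admits a direct Cayley--Hamilton argument, while the backward direction is the deeper statement and rests on a compactness argument over the polyhedral structure of $\Sigma^c$.

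For the forward direction, suppose $M$ is finitely generated over $\mathbb{Z}B$ by $m_1, \ldots, m_r$, and let $\chi \in B^\circ$ be nonzero (note that $0 \in \Sigma$ automatically, since $\Sigma$ is taken as a cone containing its apex). Choose $a \in A$ with $\chi(a) > 0$, which exists because $\chi$ does not vanish on all of $A$. Since $a^{-1}$ commutes with $B$, multiplication by $a^{-1}$ is a $\mathbb{Z}B$-linear endomorphism of $M$; writing $a^{-1} m_i = \sum_j \gamma_{ij} m_j$ with $\gamma_{ij} \in \mathbb{Z}B$ and applying Cayley--Hamilton for commutative rings, the polynomial $\det(a^{-1} I - \gamma)$ annihilates $M$. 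Multiplying through by the unit $a^r$ yields an annihilator
\[
1 + c_{r-1} a + c_{r-2} a^2 + \cdots + c_0 a^r \in \Ann_{\mathbb{Z}A}(M)
\]
with coefficients $c_i \in \mathbb{Z}B$. Every support element of the form $b a^j$ ($b \in B$, $1 \le j \le r$) satisfies $\chi(b a^j) = j \chi(a) > 0$, exhibiting $\chi$ as an element of $\Sigma_A(M)$.

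For the backward direction, assume $B^\circ \cap \Sigma^c(M) = \emptyset$, so every nonzero $\chi \in B^\circ$ admits an annihilator $\alpha_\chi = 1 + \sum_{a \in E_\chi} n_a a$ with $E_\chi$ finite and $\chi(a) > 0$ on $E_\chi$. Because $E_\chi$ is finite, $\alpha_\chi$ also witnesses $\chi' \in \Sigma_A(M)$ for every $\chi'$ in an open cone about $\chi$, and by compactness of the unit sphere in $B^\circ$ finitely many $\alpha_{\chi_1}, \ldots, \alpha_{\chi_N}$ cover $B^\circ \setminus \{0\}$. Starting from $\mathbb{Z}A$-generators $x_1, \ldots, x_n$ of $M$, the relation $\alpha_{\chi_i} x_k = 0$ rewrites any $a x_k$ as a $\mathbb{Z}$-linear combination of translates $(a b) x_k$ with $\chi_i(ab) > \chi_i(a)$. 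Since $B^\circ$ contains both $\chi_i$ and $-\chi_i$, these rewritings can shift the $(A/B)$-support of any element in every direction transverse to $B$, and a polyhedral induction confines the support of a reduced expression to a bounded subset of $A/B$, yielding a finite $\mathbb{Z}B$-generating set for $M$.

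The main obstacle is this final step: converting the ``sliding'' heuristic into a rigorous termination statement. One must verify that finitely many iterations of the rewriting rules suffice, uniformly across all $a \in A$ and generators $x_k$. This requires a careful polyhedral compactness argument in $(A/B) \otimes \mathbb{R}$ that exploits the finiteness of the supports $E_{\chi_i}$; it constitutes the actual substance of \cite[Corollary 4.5]{bsgeomab}, and I would follow their argument at that point rather than try to shortcut it.
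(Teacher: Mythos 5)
The paper does not prove this proposition; it simply attributes it to Bieri--Strebel as \cite[Corollary 4.5]{bsgeomab}, so there is no internal argument to compare against. Your forward direction is, however, a complete and correct proof: since $a^{-1}$ acts as a $\mathbb ZB$-linear endomorphism of the finitely generated $\mathbb ZB$-module $M$, the determinant trick (Cayley--Hamilton over commutative rings) yields a monic relation $a^{-r}+c_{r-1}a^{-(r-1)}+\dots+c_0$ with $c_i\in\mathbb ZB$ annihilating $M$, and clearing denominators gives $1+c_{r-1}a+\dots+c_0a^{r}\in\Ann_{\mathbb ZA}(M)$; every non-identity support element has the form $ba^{j}$ with $b\in B$ and $1\le j\le r$, so $\chi(ba^{j})=j\chi(a)>0$ because $\chi\in B^{\circ}$ kills $B$, exhibiting $\chi\in\Sigma_A(M)$. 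That is the standard argument. (One small convention slip: the hypothesis should be read as $B^{\circ}\cap\Sigma^{c}(M)=\{0\}$, not $\emptyset$; for $M\ne 0$ the zero character never lies in $\Sigma$, so $0\in\Sigma^{c}$. This does not affect your argument since you immediately restrict to nonzero $\chi$.)

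For the backward direction you give a sketch in the right spirit---openness of the condition $\chi\in\Sigma$ in the direction of $\chi$, compactness of the unit sphere in $B^{\circ}$ to extract finitely many annihilators, and then a rewriting/``sliding'' procedure to bound $A/B$-supports---and you correctly identify the termination of the rewriting as the real substance. You defer to \cite{bsgeomab} for that step. Given that the paper itself offers only a citation, this is a reasonable resolution, but note that as written your backward direction is an outline, not a proof: the uniform bound on supports across all elements of $M$ (the polyhedral compactness in $(A/B)\otimes\mathbb R$) is exactly where the theorem earns its keep, and no shortcut is apparent.
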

 
 We say that $M$ is {\em tame} (sometimes {\em 2-tame}) if $\Sigma^c(M)$ contains no antipodal points. The following is \cite[Theorem A]{bsval}
 
 \begin{thm}[Bieri and Strebel] \label{bsmain}If $G$ is an extension of $M$ by $A$ then $G$ is finitely presented if and only if $M$ is tame. \end{thm}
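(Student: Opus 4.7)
The plan is to prove both implications using the link between $\Sigma^c(M)$ and the module-theoretic structure of $M$ over the submonoid rings $\mathbb{Z}A_\chi$, where $A_\chi = \{a \in A : \chi(a) \ge 0\}$. The key preliminary observation, parallel to Proposition \ref{modfg}, is that $\chi \notin \Sigma^c(M)$ if and only if $M$ is finitely generated as a $\mathbb{Z}A_\chi$-module. Indeed, an annihilator element $\alpha = 1 + \sum n_a a$ with $\chi(a) > 0$ yields the identity $m = -\sum n_a (am)$, which lets one push any generator of $M$ into $\mathbb{Z}A_\chi$ applied to a fixed finite generating set; the converse follows by a valuation-theoretic argument, producing an appropriate $\alpha$ from the hypothesis of finite generation.

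For the easy direction (tame $\Rightarrow$ finitely presented), since $\Sigma^c(M)$ contains no antipodal pair, its image in the sphere of directions is contained in an open hemisphere. Cover this image by finitely many open half-spaces $\{v : \chi_i(v) < 0\}$; each $\chi_i \notin \Sigma^c(M)$, and so the preliminary observation yields annihilator elements $\alpha_i = 1 + \sum n_{i,a} a$ with $\chi_i(a) > 0$ for every $a$ occurring. Lift a finite generating set of $A$ and finitely many $\mathbb{Z}A$-generators of $M$ to $G$, and assemble a candidate finite presentation from: a finite presentation of the quotient $A$, the commuting relations among the lifts of the module generators, and the finitely many conjugation relations extracted from the $\alpha_i$. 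One then verifies this is indeed a presentation of $G$ by showing that every word reduces to a canonical form $m \cdot a$, using induction on the $\chi$-height of the word for a $\chi$ chosen adapted to whichever cell of the cover contains the relevant direction.

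For the forward direction (finitely presented $\Rightarrow$ tame), argue by contrapositive: assume $\chi$ and $-\chi$ both lie in $\Sigma^c(M)$. By the preliminary observation, $M$ fails to be finitely generated over both $\mathbb{Z}A_\chi$ and $\mathbb{Z}A_{-\chi}$. Given any tentative finite presentation of $G$, lift all relations to words in the generators and look at the submodule of $M$ generated by the conjugates of the module-generators appearing in them. Using the failure of finite generation on both sides of the hyperplane $\chi = 0$, one constructs an infinite ascending chain of elements of $M$ (realised as commutators in $G$) which cannot all be obtained as consequences of the finitely many chosen relations, contradicting the assumed finite presentation.

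The main obstacle is the reverse direction: verifying that the finitely many relations extracted from the cover genuinely suffice. The delicate point is that when one rewrites a word using a relation coming from $\alpha_i$, the $\chi_i$-height strictly decreases, but the rewriting may introduce terms whose heights with respect to other $\chi_j$ increase; termination therefore requires a carefully chosen compound height function, exploiting that the cover is finite and that $\Sigma^c(M)$ is a conical polyhedron (Theorem \ref{bg2}), so only finitely many combinatorial types of reduction step can arise.
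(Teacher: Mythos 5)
This theorem is not proved in the paper; it is quoted verbatim from Bieri and Strebel, cited as \cite[Theorem~A]{bsval}, so there is no proof of the author's to compare your attempt against. That said, a few remarks on your sketch as a reconstruction of the Bieri--Strebel argument are worth making.

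Your ``key preliminary observation'' --- that $\chi\notin\Sigma^c(M)$ if and only if $M$ is finitely generated over the monoid ring $\mathbb{Z}A_\chi$ with $A_\chi=\{a:\chi(a)\ge 0\}$ --- is indeed one of the central lemmas of the Bieri--Strebel theory, and the sign conventions match those used in this paper. The use of Lemma~\ref{notall} (covering the direction sphere by the complement of $\Sigma^c$) is also in the right spirit. However, the sketch as written leaves the genuinely hard content unresolved, and you say so yourself in the final paragraph. In particular: the ``tame $\Rightarrow$ finitely presented'' direction is not proved merely by exhibiting a finite set of candidate relations; the published proof relies on a detailed analysis of the relation module and on controlled rewriting arguments (and, for the sufficiency of tameness, a nontrivial argument that the group is of type $FP_2$, which is then upgraded using the $\Sigma$-structure). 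Simply asserting that ``a carefully chosen compound height function'' makes the induction terminate is the place where a complete proof has to do serious work, and your sketch does not supply it. Likewise the contrapositive in the forward direction is described only at the level of a heuristic (``construct an infinite ascending chain of elements of $M$ which cannot all be obtained as consequences''); the actual argument proceeds via ascending HNN decompositions of a finitely presented group over a character, not by a direct ascending-chain contradiction, and it uses the failure of finite generation on \emph{one} side at a time to produce a non-finitely-presented HNN factor.

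So: as a blind reconstruction this captures the correct framework and some of the right ingredients, but the two deep steps (termination of the rewriting in the backward direction, and the HNN/relation-module argument in the forward direction) are gestured at rather than carried out. Since the paper does not attempt to reprove this result, this is not a gap in the paper, but it would be a gap in your proof if presented as complete.
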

 
  Note, in particular, that the last result, together with the main result of  \cite{bgfg}, implies that if $G$ is finitely presented then $M\otimes M$ is finitely generated as $A$-module {\em via} the diagonal action. This implies a restriction on the size of the module $M$. The following Proposition and its corollary are very similar to results in \cite{gw}; but the specific result we need does not appear there.
 
 \begin{prop}\label{trdeg}If $M$ is tame as $\mathbb ZA$-module then the transcendence degree of $\mathbb ZA/J$, for any prime ideal $J$ containing the annihilator of $M$,  is at most half of the (torsion-free) rank of $A$. \end{prop}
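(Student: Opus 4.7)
The plan is to replace $M$ by the cyclic module $R=\mathbb{Z}A/J$, verify that $R$ inherits tameness from $M$, apply the tensor-square characterisation of tameness noted in the paper just after Theorem~\ref{bsmain} (the consequence of \cite{bgfg}), and then extract $d \le n/2$ from a transcendence degree calculation on $R\otimes_{\mathbb{Z}}R$. Throughout write $n=\rank(A)$ and $d=\td(\mathbb{Z}A/J)$.

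First I would verify that $R$, viewed as a $\mathbb{Z}A$-module, is itself tame. Since $R$ is cyclic, its annihilator is exactly $J$, so $\Ann(M)\subseteq\Ann(R)$; inspecting the definition of $\Sigma_A$ shows that enlarging the annihilator can only enlarge $\Sigma_A$, and hence $\Sigma^c(R)\subseteq\Sigma^c(M)$. As $\Sigma^c(M)$ contains no antipodal pair, neither does $\Sigma^c(R)$, so $R$ is tame as a $\mathbb{Z}A$-module.

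Next I would invoke the consequence of \cite{bgfg}: the tameness of $R$ implies that $R\otimes_{\mathbb{Z}}R$, under the diagonal action $a\cdot(r_1\otimes r_2)=ar_1\otimes ar_2$, is finitely generated as a $\mathbb{Z}A$-module. This diagonal action factors through the ring homomorphism $\Delta:\mathbb{Z}A\to R\otimes_{\mathbb{Z}}R$ given by $a\mapsto a\otimes a$; its image $S=\Delta(\mathbb{Z}A)$ is a quotient ring of $\mathbb{Z}A$, so $\td(S)\le n$. Because $R\otimes R$ is finitely generated as an $S$-module via $\Delta$, it is integral over $S$, and consequently $\td(R\otimes R)=\td(S)$. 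A routine transcendence-degree calculation---splitting on whether $R$ has characteristic zero (in which case one localises at $\mathbb{Z}\setminus\{0\}$ to reduce to $R_{\mathbb{Q}}\otimes_{\mathbb{Q}} R_{\mathbb{Q}}$) or prime characteristic $p$ (in which case $R\otimes_{\mathbb{Z}}R=R\otimes_{\mathbb{F}_p}R$)---yields $\td(R\otimes_{\mathbb{Z}}R)=2d$. Assembling,
\[
2d\;=\;\td(R\otimes_{\mathbb{Z}}R)\;=\;\td(S)\;\le\;n,
\]
which is the required inequality $d \le n/2$.

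The main obstacle is conceptual rather than technical: one has to recognise that the correct move is to pass from $M$ to the cyclic quotient $R$, so that \cite{bgfg} can be applied directly to the module whose annihilator we are analysing, and then to translate finite generation of $R\otimes R$ into a bound on transcendence degree via the diagonal-image subring $S$. Once this is in place, each remaining ingredient---the inclusion $\Sigma^c(R)\subseteq\Sigma^c(M)$, the integrality of $R\otimes R$ over $S$, and the identity $\td(R\otimes_{\mathbb{Z}}R)=2d$---is standard.
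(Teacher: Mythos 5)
Your proof is correct and follows essentially the same route as the paper: pass to the cyclic quotient $\mathbb{Z}A/J$, observe it is tame, invoke the result of \cite{bgfg} that the tensor square is finitely generated (hence integral) over the diagonal image of $\mathbb{Z}A$, and compare transcendence degrees to get $2d\le\rank(A)$. The one small difference is that you justify tameness of $\mathbb{Z}A/J$ directly from the definition (a larger annihilator only enlarges $\Sigma_A$, so $\Sigma^c(\mathbb{Z}A/J)\subseteq\Sigma^c(M)$), which handles an arbitrary prime $J\supseteq\Ann(M)$ in one step, whereas the paper first decomposes $\Sigma^c(M)$ over the primes minimal over $\Ann(M)$ and reduces to those; your observation is a touch cleaner and matches the stated generality of the proposition more directly.
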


\begin{proof}

We refer to \cite{bsgeomab} for a discussion of basic properties of the Bieri-Strebel invariant. We have
$$\Sigma^c(M)=\Sigma^c(\mathbb ZA/\Ann(M))= \bigcap \Sigma^c(\mathbb ZA/J)$$
where the intersection is taken over the prime ideals $J$ minimal over $\Ann(M)$. Let $J$ be one of these prime ideals and suppose that $\mathbb ZA/J$ has transcendence degree $d$.  It clearly suffices to show that $d\le \rank(A)/2$. 

Observe that  $\mathbb ZA/J$ is also tame. Thus, by the main result of \cite{bgfg},  $\mathbb ZA/J \otimes_{\mathbb Z} \mathbb ZA/J$ is finitely generated under the diagonal action of $A$. Otherwise put, $\mathbb ZA/J \otimes_{\mathbb Z} \mathbb ZA/J$ is integral over the image of the homomorphism $\delta$ obtained by composing the maps
$$ \mathbb ZA \rightarrow \mathbb ZA \otimes_{\mathbb Z} \mathbb ZA \rightarrow \mathbb ZA/J \otimes_{\mathbb Z} \mathbb ZA/J$$
where the former map is the diagonal map that satisfies $a\mapsto a\otimes a$ for $a\in A$. As $\mathbb ZA/J \otimes_{\mathbb Z} \mathbb ZA/J$ is clearly a cyclic $\mathbb ZA \otimes_{\mathbb Z} \mathbb ZA$-module, it follows that $\mathbb ZA/J \otimes_{\mathbb Z} \mathbb ZA/J$ is integral over $\delta(\mathbb ZA)$. In particular, these two rings have the same transcendence degree. 

The transcendence degree of $\mathbb ZA/J \otimes_{\mathbb Z} \mathbb ZA/J$ is exactly $2d$ and therefore the transcendence degree of  $\delta(\mathbb ZA)$ is exactly $2d$. But the transcendence degree of  $\delta(\mathbb ZA)$ is no more than the transcendence degree of  $\mathbb ZA$ which is easily seen to be $\rank(A)$. The proof is complete.
\end{proof}
\begin{corollary} If $M$ is tame then the dimension of $\Sigma^c(M)$ is at most $\rank(A)/2 +1$. \end{corollary}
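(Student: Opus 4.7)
The corollary is essentially an immediate assembly of earlier results in the section, so my plan is to read off the bound by chaining them together.

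First, I would invoke Theorem \ref{bg2} to write $\Sigma^c(M)$ as a finite union of conical hulls $[\Delta_v(M)]$ as $v$ ranges over valuations of $\mathbb{Z}$. Since the dimension of a finite union of polyhedra is the maximum of the dimensions of the pieces, it suffices to bound $\dim [\Delta_v(M)]$ for a single $v$. By Theorem \ref{bg1}, $\Delta_v(M)$ is a polyhedron of dimension equal to the transcendence degree $d$ of $R = \mathbb{Z}A/\Ann(M)$, and by the observation made immediately after Theorem \ref{bg2} (passing from a polyhedron to its conical saturation raises the dimension by at most one), we get $\dim [\Delta_v(M)] \le d+1$.

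Next, I would reduce $d$ to the quantity controlled by Proposition \ref{trdeg}. By definition, the transcendence degree of $R$ is the maximum of the transcendence degrees $\td(\mathbb{Z}A/J)$ taken over prime quotients, and it is enough to consider those $J$ minimal over $\Ann(M)$. Since $M$ is assumed tame, Proposition \ref{trdeg} applies to every such $J$ and yields $\td(\mathbb{Z}A/J) \le \rank(A)/2$. Taking the maximum gives $d \le \rank(A)/2$.

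Combining the two inequalities,
\[
\dim \Sigma^c(M) \;=\; \max_v \dim [\Delta_v(M)] \;\le\; d+1 \;\le\; \rank(A)/2 + 1,
\]
which is the claimed bound.

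There is no real obstacle here; the only thing that even deserves a remark is the passage from $\td(R)$ to $\td(\mathbb{Z}A/J)$ for a minimal prime, which follows directly from the definition of transcendence degree for commutative rings given just before Theorem \ref{bg1}. Everything else is a bookkeeping step using Theorems \ref{bg1} and \ref{bg2} together with the preceding proposition.
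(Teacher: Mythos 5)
Your proof is correct and follows essentially the same route as the paper's: use Proposition \ref{trdeg} to bound $\td(R)$ by $\rank(A)/2$, then Theorem \ref{bg1} to bound each $\Delta_v(M)$ by $\td(R)$, pass to cones (adding at most one to the dimension), and finally apply Theorem \ref{bg2} to conclude. The only difference is that you spell out the step from Proposition \ref{trdeg} (which bounds individual prime quotients) to the transcendence degree of $R$, which the paper leaves implicit; this is a harmless amplification, not a different argument.
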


\begin{proof} It follows from the proposition that the transcendence degree $d$ of \\$\mathbb ZA/\Ann(M)$ is at most $\rank(A)/2$. By Theorem \ref{bg1}, the dimension of any set of the form $\Delta_v(M)$ is at most $d$ and the dimension of the cone generated by $\Delta_v(M)$ is at most $d+1$. Thus, by Theorem \ref{bg2}, the dimension of $\Sigma^c(M)$  is at most $d+1$ which is, in turn, at most, $\rank(A)/2+1$.\end{proof}

The argument for the following appears first in \cite{bsfg}; we repeat it here because it is very short and elegant.
\begin{lemma}[Bieri and Strebel] \label{notall} If $A$ has torsion-free rank greater than 1 and if $M$ is tame then $\Sigma^c(M) \cup -\Sigma^c(M) \neq A^*$. \end{lemma}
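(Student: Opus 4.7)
The plan is to argue by contradiction, combining tameness with the closedness of $\Sigma^c(M)$ and the connectedness of the punctured dual space. Assume $M\neq 0$ (else $\Sigma^c(M)=\emptyset$ and the conclusion is immediate) and suppose, for contradiction, that $\Sigma^c(M)\cup -\Sigma^c(M)=A^*$.

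The first step is to exploit tameness: since $\Sigma^c(M)$ contains no antipodal points, $\Sigma^c(M)\cap -\Sigma^c(M)\subseteq\{0\}$. Combined with the supposition, this exhibits $A^*\setminus\{0\}$ as the disjoint union of $\Sigma^c(M)\setminus\{0\}$ and $-\Sigma^c(M)\setminus\{0\}$.

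The second step is to note that $\Sigma^c(M)$ is closed in $A^*$. This is immediate from Theorem \ref{bg2}, since $\Sigma^c(M)$ is a finite union of sets of the form $[\Delta_v(M)]$, each of which is a closed conical polyhedron by Theorem \ref{bg1}. Hence both $\Sigma^c(M)\setminus\{0\}$ and $-\Sigma^c(M)\setminus\{0\}$ are relatively closed in $A^*\setminus\{0\}$.

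The final step is a connectedness argument: because $\rank(A)\ge 2$, the space $A^*\setminus\{0\}$ is connected, so one of the two relatively closed pieces that cover it must be empty. By the $\chi\mapsto -\chi$ symmetry we may assume $\Sigma^c(M)\setminus\{0\}=\emptyset$, i.e.\ $\Sigma^c(M)\subseteq\{0\}$. But then $\Sigma^c(M)\cup -\Sigma^c(M)\subseteq\{0\}$, which contradicts equality with the at-least-two-dimensional $A^*$. There is essentially no obstacle in this argument; the only non-formal ingredient is the closedness of $\Sigma^c(M)$, which is already delivered by the Bieri--Groves structure theorem quoted above, and this is consistent with the paper's description of the proof as very short and elegant.
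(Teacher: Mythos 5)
Your proof is correct and is essentially the paper's argument: both use tameness to get disjointness of $\Sigma^c(M)$ and $-\Sigma^c(M)$ away from $0$, closedness of $\Sigma^c(M)$, and a connectedness argument. The only cosmetic difference is that the paper works with the radial projection onto the unit sphere while you work in $A^*\setminus\{0\}$ directly; these are equivalent since $\Sigma^c(M)$ is a cone.
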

\begin{proof} We must consider the projection of $\Sigma^c(M)$ onto the (unit) sphere; call this $\Omega$. Then $\Omega$ and $-\Omega$ are closed and disjoint subsets of the sphere. As the sphere (in dimension greater than zero)  is connected, their union is therefore not the whole sphere. \end{proof}

\section{Proof of Theorem A}
Let $\pi_i$ denote the projection $G=G_1\times\dots\times  G_n \rightarrow G_i$. Then, because $\pi_i$ has finite cokernel, it induces an injection $\pi_i^*: G_i^*\rightarrow G^*$.

\begin{lemma} \label{sigmagdash}
$$\Sigma^c_G(G')= \cup_i \pi_i^*\left(\Sigma_{G_i}^c(G_i')\right)$$ \end{lemma}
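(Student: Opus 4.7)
The idea is to decompose $G'$ as a $\mathbb Z(G/G')$-module along the direct product factors and compute $\Sigma^c$ summand by summand. Since elements of distinct factors of $G = G_1\times\cdots\times G_n$ commute, the commutator of $(g_1,\dots,g_n)$ and $(h_1,\dots,h_n)$ is componentwise, so $G' = G_1'\oplus\cdots\oplus G_n'$ as abelian groups; moreover, since $G_j$ centralises $G_i$ for $j\ne i$, the conjugation action of $G/G'$ on the $i$th summand factors through the projection $G/G' \twoheadrightarrow G_i/G_i'$ induced by $\pi_i$.

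The first step is a general direct-sum identity: $\Sigma_G^c(M_1\oplus M_2) = \Sigma_G^c(M_1)\cup \Sigma_G^c(M_2)$. One inclusion is immediate from $\Ann(M_1\oplus M_2) = \Ann(M_1)\cap \Ann(M_2)$. For the other, given witnesses $\alpha_i = 1+\sum_a n_a^{(i)} a \in \Ann(M_i)$ with $\chi(a) > 0$, the product $\alpha_1\alpha_2$ lies in $\Ann(M_1)\cap\Ann(M_2)$ and, when expanded, again has constant term $1$ together with further terms all of the form $n\cdot b$ with $\chi(b)>0$. Iterating reduces the lemma to showing, for each $i$, that $\Sigma^c_G(G_i') = \pi_i^*(\Sigma^c_{G_i}(G_i'))$.

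For the main step I would first establish $\Sigma^c_G(G_i') \subseteq \pi_i^*(G_i^*)$. Suppose $\chi\in G^*$ does not vanish on $\ker\pi_i = \prod_{j\ne i}G_j$; then there exist $j\ne i$ and $a\in G_j$ with $\chi(a)>0$ (replace $a$ by $a^{-1}$ if necessary). Because $a$ acts trivially on $G_i'$, we have $1-a \in \Ann_{\mathbb Z(G/G')}(G_i')$, which is of the required form $1 + \sum n_b b$ with $\chi(b)>0$. Hence $\chi\in\Sigma_G(G_i')$, which shows that every element of $\Sigma^c_G(G_i')$ must vanish on $\ker\pi_i$, i.e.\ lies in $\pi_i^*(G_i^*)$.

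To finish, apply Proposition \ref{projsigma} with $B = G_i/G_i'\le G/G'$: the restriction $\iota^*: G^*\to G_i^*$ sends $\Sigma^c_G(G_i')$ onto $\Sigma^c_{G_i}(G_i')$. Since $\pi_i$ is a left inverse of the inclusion $G_i\hookrightarrow G$, the composition $\iota^*\circ\pi_i^*$ is the identity on $G_i^*$, so $\pi_i^*$ is a section of $\iota^*$. Combined with the containment of the previous paragraph, applying $\pi_i^*$ to the surjection gives $\Sigma^c_G(G_i') = \pi_i^*(\iota^*(\Sigma^c_G(G_i'))) = \pi_i^*(\Sigma^c_{G_i}(G_i'))$. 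Taking the union over $i$ yields the lemma. The only step requiring genuine thought is producing the witness $1-a$ in the third paragraph; everything else is bookkeeping around the direct-sum decomposition and a direct appeal to Proposition \ref{projsigma}.
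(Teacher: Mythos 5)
Your proof follows the paper's argument in the same two steps — decompose $G' = G_1'\oplus\cdots\oplus G_n'$ and use $\Sigma^c_G(\oplus_i M_i)=\cup_i\Sigma^c_G(M_i)$, then identify $\Sigma^c_G(G_i')$ with $\pi_i^*\bigl(\Sigma^c_{G_i}(G_i')\bigr)$ using that $G_j$ for $j\neq i$ acts trivially on $G_i'$ — and it is correct. The paper states both steps as standard facts with one-line justifications; you supply the proofs, the one genuinely new ingredient being the $1-a$ witness showing $\Sigma^c_G(G_i')\subseteq\pi_i^*(G_i^*)$, which is exactly what lets Proposition~\ref{projsigma} close the argument.
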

\begin{proof}
By standard properties of the $\Sigma$-invariant, (see, for example, \cite{bsgeomab}), 
$$\Sigma^c_G(G')= \Sigma^c_G(\times_i G'_i) = \cup _i \Sigma^c_G(G'_i).$$
Because each of the $G_j$ with $j\neq i$ act trivially on $G_i'$ we also have\\  $\Sigma^c_G(G'_i)= \pi_i^*\left(\Sigma_{G_i}^c(G_i')\right)$ and the proof is complete.
\end{proof}

Henceforth, we shall identify $G_i^*$ with its image under the projection $\pi_i^*$ and in particular, we shall identify $ \pi_i^*\left(\Sigma_{G_i}^c(G_i')\right)$ with $ \Sigma_{G_i}^c(G_i')$.  Also recall that, if $S$ is a subgroup of $G$, then $S^{\circ}$ denotes the kernel of the restriction map $G^* \longrightarrow S^*$.  Observe that, by a minor extension of Proposition 5 of \cite{metab}, any virtual subdirect product of the $G_i$ is finitely generated.

\begin{lemma} \label{samefp} Suppose $S,T\le G$ are subgoups of $G$.  Then $S^{\circ}=T^{\circ}$ if and only if the isolator of $SG'$ in $G$ coincides with the isolator of $TG'$ in $G$. If $S,T$ are virtual subdirect products  and $S^{\circ}=T^{\circ}$, then  $S$ is finitely presented if and only if $T$ is finitely presented. \end{lemma}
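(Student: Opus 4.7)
The plan is to prove the two assertions in succession, reducing the second to a comparison of Bieri--Strebel $\Sigma$-invariants.

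For the first assertion, observe that $S^{\circ}$ consists of those homomorphisms $G\to\mathbb{R}$ that vanish on $S$, equivalently on the image $SG'/G'$ of $S$ in $G_{ab}$, equivalently on the $\mathbb{R}$-linear span of that image in $G_{ab}\otimes\mathbb{R}$. Two subgroups of the finitely generated abelian group $G_{ab}$ share the same real linear span if and only if they have the same isolator, and the isolator of $SG'/G'$ in $G_{ab}$ is the image under $G\to G_{ab}$ of the isolator of $SG'$ in $G$. Combining these observations, $S^{\circ}=T^{\circ}$ if and only if the isolators of $SG'$ and $TG'$ in $G$ agree.

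For the second assertion, let $H$ denote the common isolator. Then $SG'$ and $TG'$ are finite-index subgroups of $H$, so $J:=SG'\cap TG'$ has finite index in each. The elementary identification $S/(S\cap TG')\cong SG'/J$ shows that $S\cap TG'$ has finite index in $S$, and similarly $T\cap SG'$ has finite index in $T$. Since finite index preserves finite presentation, I may replace $S$ and $T$ by these subgroups and thereby assume $SG'=TG'=:K$ with a common abelian image $B:=K/G'\le G_{ab}$. Theorem~\ref{bsmain} applied to the extensions $1\to S\cap G'\to S\to B\to 1$ and $1\to T\cap G'\to T\to B\to 1$ then reduces the task to showing that $S\cap G'$ and $T\cap G'$ are simultaneously tame or simultaneously not tame as $\mathbb{Z}B$-modules.

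I would establish this by showing that each of $S\cap G'$ and $T\cap G'$ is tame if and only if the ambient module $G'$ is tame as a $\mathbb{Z}B$-module. The forward direction is immediate from the inclusion $\Ann_{\mathbb{Z}B}(G')\subseteq\Ann_{\mathbb{Z}B}(S\cap G')$ of annihilators, which gives $\Sigma^{c}_{B}(S\cap G')\subseteq\Sigma^{c}_{B}(G')$. For the converse, the virtual subdirect hypothesis enters via a commutator-lifting argument: for each $i$ the finite-index subgroup $\pi_{i}(S)\le G_{i}$ has derived subgroup of finite index in $G_{i}'$, and commutators of elements of $S$ lie in $S'\le S\cap G'$; hence $S\cap G'$ projects onto a finite-index subgroup of each $G_{i}'$. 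Combined with the direct-factor decomposition $G'=\prod G_{i}'$, the description $\Sigma^{c}_{G_{ab}}(G')=\bigcup_{i}\pi_{i}^{*}\bigl(\Sigma^{c}_{G_{i,ab}}(G_{i}')\bigr)$ from Lemma~\ref{sigmagdash}, and Proposition~\ref{projsigma} for the restriction $\iota^{*}\colon G_{ab}^{*}\to B^{*}$, one identifies $\Sigma^{c}_{B}(S\cap G')$ with $\Sigma^{c}_{B}(G')$; the same argument applies to $T\cap G'$.

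The principal obstacle is this converse direction: $S\cap G'$ is not of finite index in $G'$ in general (consider the diagonal in a direct square of a non-polycyclic metabelian group), so the standard invariance of the $\Sigma$-invariant under passage to finite-index submodules is unavailable. One must instead exploit the product decomposition of $G'$ together with the subdirect-product hypothesis on $S$ to reduce the comparison to a factorwise statement that can be handled via Lemma~\ref{sigmagdash} and Proposition~\ref{projsigma}.
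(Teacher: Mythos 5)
Your treatment of the first assertion is correct and matches the paper's: pass to the image in $G_{\mathrm{ab}}$, identify $S^{\circ}$ with the annihilator of the real span of that image, and note that span determines and is determined by the isolator.

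For the second assertion your overall strategy is the same as the paper's --- reduce, via Theorem~\ref{bsmain}, to a comparison of $\Sigma^{c}$-invariants and then use the product decomposition of $G'$ together with the projections $\pi_{i}$ --- with two cosmetic differences (you first intersect down so that $SG'=TG'$, and you apply Theorem~\ref{bsmain} with $M=S\cap G'$ rather than with $M=S'$; both moves are legitimate). However, the key step contains a genuine gap. You assert that for each $i$ the derived subgroup $H_{i}'=\pi_{i}(S)'$ has \emph{finite index} in $G_{i}'$, and build the identification $\Sigma^{c}_{B}(S\cap G')=\Sigma^{c}_{B}(G')$ on that. This is stronger than what is available: from $[G_{i}:H_{i}]<\infty$ one gets only that $G_{i}'/H_{i}'$ is \emph{finitely generated as an abelian group} (it sits inside $H_{i}/H_{i}'$ after chasing the obvious diagram), not that it is finite, and the paper is deliberately careful to claim only the weaker fact. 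The paper then closes the argument without any finite-index input by using the short-exact-sequence additivity of $\Sigma^{c}$: since $G_{i}'/H_{i}'$ is finitely generated as abelian group, $\Sigma^{c}(G_{i}'/H_{i}')=\{0\}$, hence $\Sigma^{c}(G_{i}')=\Sigma^{c}(H_{i}')$; and $H_{i}'=\pi_{i}(S')$ is a module \emph{quotient} of $S'$, giving $\Sigma^{c}(G_{i}')\subseteq\Sigma^{c}(S')$. Your sketch replaces this with the unjustified finite-index claim and then a one-line ``one identifies $\Sigma^{c}_{B}(S\cap G')$ with $\Sigma^{c}_{B}(G')$'' that does not explain how Lemma~\ref{sigmagdash} and Proposition~\ref{projsigma} produce the reverse inclusion. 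You correctly diagnose that $S\cap G'$ need not have finite index in $G'$, but the repair you propose just pushes the same difficulty down to the factors; the actual fix is to abandon finite index entirely in favour of the ``finitely generated abelian quotient has trivial $\Sigma^{c}$'' argument and the observation that $\pi_{i}(S')$ is a quotient module of $S'$.
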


\begin{proof} Observe firstly that $S^{\circ}=(SG')^{\circ}$ as every $\rho\in G^*$ must be zero on $G'$. Also, if $\rho\in G^*$ is zero on a power of an element of $G$ then it must be zero on the element itself. Thus, denoting the isolator of $SG'$ by $i(SG')$, we must have $(SG')^{\circ}=(i(SG'))^{\circ}$. Thus it suffices to show that, for $S$ and $T$ isolated subgroups containing $G'$, we have $S^{\circ}=T^{\circ}$ if and only if $S=T$.

This is easily verified. One way is to embed $G/G'$ into $G/G' \otimes \mathbb Q$. Then observe that there is a bijective correspondence between isolated subgroups of $G/G'$ and subspaces of $G/G' \otimes \mathbb Q$. Further, the correspondence $U\rightarrow U^{\circ}$ is bijective for subspaces $U$ of a finite dimensional vector space.

Suppose then that $S^{\circ}=T^{\circ}$. It follows that $SG'$ is finitely presented if and only if $TG'$ is finitely presented since they have a common finite extension. Thus it remains to show that (say) $S$ is finitely presented if and only if $SG'$ is finitely presented. To show this, observe that $S$ is finitely presented if and only if $S'$ is tame as $S/S'$-module and $SG'$ is finitely presented if and only if $G'$ is tame as $SG'/G'$-module. The latter can be re-stated as `$G'$ is tame as $S/(S\cap G')$-module' or, because $(S\cap  G')/S'$ acts trivially on $G'$, that ``$G'$ is tame as $S/S'$-module''. 
Thus we need to show that $S'$ is tame as $S/S'$-module if and only if $G'$ is tame as $S/S'$-module.

Thus it will suffice to prove that:

$$ \Sigma_{S/S'}^c (S')=\Sigma_{S/S'}^c (G').$$

Observe firstly, that because $S$ is now assumed to be a virtual subdirect product, for each $i$ we have that  $\pi_i(S)=H_i$ has finite index in $G_i$. It follows easily that $G_i'/H_i'$ is finitely generated as abelian group. Thus
$$ \Sigma_{S/S'}^c (G_i')=\Sigma_{S/S'}^c(G_i'/H_i') \cup \Sigma_{S/S'}^c(H_i') =  \Sigma_{S/S'}^c(H_i').$$
The last equality holds because $G_i'/H_i'$ is finitely generated as abelian group and so the complement of its invariant is zero.

But $H_i'=\pi_i(S')$ is an $S$-module quotient of $S'$ and so  $\Sigma_{S/S'}^c (G_i')=  \Sigma_{S/S'}^c(H_i')$ is a subset of $\Sigma_{S/S'}^c(S')$. Thus $\Sigma_{S/S'}^c(S')\supseteq \cup_i \Sigma_{S/S'}^c(G_i')$. Thus,
$$\Sigma_{S/S'}^c (S')\subseteq \Sigma_{S/S'}^c(G')=\cup_i \Sigma_{S/S'}^c(G_i') \subseteq \Sigma_{S/S'}^c (S'),$$
by Lemma \ref{sigmagdash}.
Thus we have equality throughout and the proof is complete.

\end{proof}
In contrast to the case for finite presentation, it is possible that among the groups corresponding to a single point of the Grassmannian, some groups may have the property of being a virtual subdirect product and some may not. But we can still make a partial recognition of the property in geometric terms.
\begin{lemma} \label{vsp1}Let  $S$  be a subgroup of $G$.\begin{enumerate}
\item  If $S$ is a virtual subdirect product of the $G_i$ then
 $S^{\circ} \cap G_i^*=\{0\}$ for each $i$.
 \item If $S^{\circ} \cap G_i^*=\{0\}$ for each $i$ and $S\ge G'$ then $S$ is a virtual subdirect product of the $G_i$.
 \end{enumerate}\end{lemma}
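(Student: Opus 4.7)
For part (1), the plan is to unwind the identification $G_i^* \hookrightarrow G^*$. Any $\chi \in G_i^* \subseteq G^*$ is of the form $\chi = \chi_i \circ \pi_i$ for a unique $\chi_i \in \Hom(G_i,\mathbb{R})$. If in addition $\chi \in S^{\circ}$, then $\chi$ vanishes on $S$, so $\chi_i$ vanishes on $\pi_i(S)$. By the virtual subdirect product hypothesis, $\pi_i(S)$ has finite index in $G_i$; since $\mathbb{R}$ is torsion-free, $\chi_i$ must therefore vanish on all of $G_i$, giving $\chi = 0$.

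For part (2), the strategy is to reduce the question of finite index of $\pi_i(S)$ in $G_i$ to a question about the abelianisation, and then detect the abelianisation condition by means of real characters. The hypothesis $S \ge G'$ delivers $\pi_i(S) \ge \pi_i(G') = G_i'$, so $\pi_i(S)$ has finite index in $G_i$ if and only if its image in $G_i/G_i'$ has finite index there. Because $G_i/G_i'$ is a finitely generated abelian group, a subgroup $H \le G_i/G_i'$ has finite index exactly when every homomorphism $G_i/G_i' \to \mathbb{R}$ which vanishes on $H$ is zero, since the quotient by $H$ is then a finite abelian group and so admits no nonzero real character.

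Applied to $H = \pi_i(S) G_i'/G_i'$, this reduces the matter to showing that any $\chi_i \in G_i^*$ vanishing on $\pi_i(S)$ must be zero. But such a $\chi_i$ pulls back under $\pi_i$ to an element $\chi_i \circ \pi_i \in G^*$ which lies in $G_i^*$ (under the identification $\pi_i^*$) and which vanishes on $S$, hence belongs to $S^{\circ} \cap G_i^*$. By hypothesis this intersection is trivial, so $\chi_i \circ \pi_i = 0$, and surjectivity of $\pi_i$ yields $\chi_i = 0$, completing the proof. There is no real obstacle; the only point requiring care is the bookkeeping between $G_i^*$ and its image in $G^*$, and the observation that the hypothesis $S \ge G'$ is what allows the problem to be transferred to the abelian quotient (without it, $\pi_i(S)$ could be far from finite index even when it maps onto a finite-index subgroup modulo $G_i'$).
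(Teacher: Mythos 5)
Your proposal is correct and is essentially the same argument as the paper's, differing only in presentation. For part (1) the paper phrases the argument more succinctly in the language of dual maps: the embedding $\sigma\colon S\hookrightarrow G$ composed with $\pi_i$ has finite cokernel, hence $\sigma^*\circ\pi_i^*$ is injective, which is exactly the statement $G_i^*\cap S^{\circ}=\{0\}$. Your unwinding of this into the concrete observation that a real character vanishing on a finite-index subgroup of $G_i$ must vanish on all of $G_i$ (by torsion-freeness of $\mathbb{R}$) is the same fact made explicit. For part (2) the paper argues by contraposition — if $\pi_j(S)$ has infinite index then, since $S\ge G'$ forces $G_j/\pi_j(S)$ to be abelian, one directly produces a nonzero character of $G_j$ killing $\pi_j(S)$ and pulls it back to a nonzero element of $G_j^*\cap S^{\circ}$ — whereas you run the equivalence forward via the criterion that a subgroup of a finitely generated abelian group has finite index iff only the zero real character vanishes on it. The underlying content is identical; your version makes the bookkeeping with $G_i^*$ versus its image $\pi_i^*(G_i^*)$ more explicit, and you correctly isolate the role of the hypothesis $S\ge G'$.
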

\begin{proof} 1. Let $\sigma$ denote the embedding of $S$ into $G$. Then $\pi_i\circ\sigma$ has finite cokernel because $\pi_i(S)$ has finite index in $G_i$. Thus the dual map $\sigma^*\circ \pi_i^*$ is injective and this is equivalent to saying that $G_j^* \cap S^{\circ} =\{0\}$.

2. Suppose that $S\ge G'$ and $S$ is not a virtual subdirect product. Then $\left|G_j:\pi_j(S)\right|$ is infinite for some $j$. But $G_j'\le \pi_j(G') \le \pi_j(S)$  and so $G_j/\pi_j(S)$ is infinite and abelian. Thus we can find a non-zero character of $G_j$ which is zero on $\pi_j(S)$. Pulling this character back to a character of $G$ provides a non-zero element of  $G_j^* \cap S^{\circ} $.\end{proof}

\begin{prop} \label{vsp2}If $k\ge m$ then the set of  points of$\mathcal G_k(\mathbb Q)$ which do not correspond to any  virtual subdirect product of the $G_i$, is a proper subvariety of $\mathcal G_k(\mathbb Q)$ having smaller dimension. \end{prop}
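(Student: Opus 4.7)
The plan is to translate the condition of corresponding to a virtual subdirect product into a geometric incidence condition on $\mathcal{G}_k$ and then apply fact (c) of Section 3. First I would use Lemma \ref{vsp1} to give a clean characterisation of the ``good'' points: a rational subspace $U \in \mathcal{G}_k(\mathbb Q)$ of codimension $k$ corresponds to some virtual subdirect product if and only if $U \cap G_i^* = \{0\}$ for every $i$. Part (1) of Lemma \ref{vsp1} handles the ``only if''. For the ``if'' direction, I would invoke the bijective correspondence between rational subspaces of $G^*$ and isolated subgroups of $G$ containing $G'$ that is recorded in the proof of Lemma \ref{samefp}: this produces a subgroup $S \ge G'$ with $S^{\circ}=U$, and part (2) of Lemma \ref{vsp1} then forces $S$ to be a virtual subdirect product. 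Consequently the set of points in the proposition is exactly
$$\bigcup_{i=1}^n \{U \in \mathcal{G}_k : U \cap G_i^* \neq \{0\}\}.$$

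Next I would bound each piece using fact (c). Each subspace $G_i^*$ is rational, defined inside the ambient $N$-dimensional vector space $G^*$ (with $N=\operatorname{rk}(G)$), and has dimension $d_i$ equal to the torsion-free rank of $(G_i)_{\text{ab}}$, so $d_i \le m$. Applying fact (c) to $\mathcal{G}_k = \mathcal G(N-k,N)$ with $W = G_i^*$ and $l=1$, the locus $\{U : \dim(U \cap G_i^*) \ge 1\}$ is a subvariety defined over $\mathbb Q$, and it is a proper subvariety (hence has strictly smaller dimension) provided $d_i + (N-k) - N < 1$, i.e.\ $d_i \le k$. The standing hypothesis $k \ge m \ge d_i$ gives this at once.

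Finally, since $\mathcal G_k$ is irreducible by fact (a), a finite union of proper subvarieties is again a proper subvariety of strictly smaller dimension, and restricting to $\mathbb Q$-points concludes the proof. There is no serious obstacle in this argument; the content is entirely in Lemma \ref{vsp1} (which converts the group-theoretic condition into the linear-algebraic one) and in the Schubert-type incidence fact (c), with the role of the hypothesis $k \ge m$ being precisely to guarantee that each incidence locus $\{U : U \cap G_i^* \neq \{0\}\}$ is a proper subvariety rather than all of $\mathcal G_k$.
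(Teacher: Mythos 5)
Your proof is correct and takes essentially the same route as the paper's: both reduce the claim to the incidence condition $U\cap G_i^*\neq\{0\}$ via Lemma \ref{vsp1} (together with the rational-subspace/isolated-subgroup correspondence from Lemma \ref{samefp} for the converse direction), and both then apply fact (c) of (\ref{Grass}) with $W=G_i^*$, using $k\ge m\ge\dim G_i^*$ to see each incidence locus is a proper subvariety, whence so is their finite union. The only difference is that you spell out the verification of the properness inequality $d_i+(N-k)-N<1$ and the two directions of the characterisation, which the paper leaves implicit.
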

\begin{proof} If $k\ge m$ then $k \ge \dim(G_i^*)$ for each $i$. Thus the set of subspaces in $\mathcal G_k(\mathbb Q)$ which meet a single subspace $G_i^*$ is a proper subvariety by (c) of (\ref{Grass}). Thus the set of subspaces which meet at least one of the $G_i^*$ is also a proper subvariety.  By Lemma \ref{vsp1} and Lemma \ref{samefp}, this is just the set of points of  $\mathcal G_k(\mathbb Q)$ which do not correspond to any virtual subdirect product of the $G_i$.\end{proof}

\begin{lemma} \label{fpvsp}Let $\Gamma$ denote the set of points of $G^*$ of the form $\chi+\psi$ where $\chi, \psi \in \Sigma^c_G(G')$. Then the virtual subdirect product $S$ is finitely presented if and only if $\Gamma\cap S^{\circ}=\{0\}$.\end{lemma}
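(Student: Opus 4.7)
The plan is to bridge finite presentation of $S$ and the condition $\Gamma \cap S^{\circ} = \{0\}$ by first translating ``$S$ finitely presented'' into a tameness statement via Lemma \ref{samefp}, then pushing this through the dual of a restriction map using Proposition \ref{projsigma}. Write $A = G/G'$, so that $A^{*} = G^{*}$, and set $B = SG'/G'$. The restriction $\iota^{*} : A^{*} \to B^{*}$ has kernel equal to the set of characters of $G$ that vanish on $SG'$, and since every character already vanishes on $G'$, this kernel coincides with $S^{\circ}$. From within the proof of Lemma \ref{samefp} we know that $S$ is finitely presented if and only if $G'$ is tame as a $B$-module, i.e.\ $\Sigma^{c}_B(G')$ contains no nonzero antipodal pair. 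Proposition \ref{projsigma} then gives $\iota^{*}(\Sigma^{c}_G(G')) = \Sigma^{c}_B(G')$, so $B$-tameness is equivalent to the asymmetric statement: for every $\chi,\psi \in \Sigma^{c}_G(G')$ with $\chi+\psi \in S^{\circ}$, one has $\chi \in S^{\circ}$ (and hence $\psi \in S^{\circ}$).

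To convert this into the symmetric condition $\Gamma \cap S^{\circ} = \{0\}$, I would bring in two further ingredients. Lemma \ref{sigmagdash} gives $\Sigma^{c}_G(G') \subseteq \bigcup_i G_i^{*}$, while Lemma \ref{vsp1}(1) gives $S^{\circ} \cap G_i^{*} = \{0\}$ for each $i$ since $S$ is a virtual subdirect product; together they yield $\Sigma^{c}_G(G') \cap S^{\circ} = \{0\}$. Separately, because each $G_i$ is finitely presented so is $G$, whence $G'$ itself is tame as an $A$-module, and $\Sigma^{c}_G(G')$ therefore contains no nonzero antipodal pair.

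With these in hand the equivalence goes as follows. Forward: if $S$ is finitely presented and $\chi+\psi \in \Gamma \cap S^{\circ}$, then $B$-tameness forces $\chi,\psi \in S^{\circ}$, and then $\Sigma^{c}_G(G') \cap S^{\circ} = \{0\}$ gives $\chi=\psi=0$, so $\chi+\psi=0$. Reverse: if $\Gamma \cap S^{\circ} = \{0\}$ and $\chi,\psi \in \Sigma^{c}_G(G')$ satisfy $\chi+\psi \in S^{\circ}$, then $\chi+\psi=0$, so $-\chi = \psi \in \Sigma^{c}_G(G')$; tameness of $G'$ as $A$-module forces $\chi=0$, so certainly $\chi \in S^{\circ}$, establishing $B$-tameness and hence finite presentation of $S$.

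The main obstacle I expect is bookkeeping: one must keep three $\Sigma$-invariants of $G'$ straight (as $A$-, $B$-, and $S/S'$-module) when invoking Lemma \ref{samefp}, and recognise that \emph{two} separate applications of tameness are needed — the $B$-module tameness is what encodes finite presentation of $S$, while the $A$-module tameness (automatic from $G$ being finitely presented) is what lets the symmetric condition $\Gamma \cap S^{\circ} = \{0\}$ stand in for the asymmetric one. Once those pieces are in place, the vsp hypothesis via Lemmas \ref{sigmagdash} and \ref{vsp1} supplies the missing link and no further calculation is required.
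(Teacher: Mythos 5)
Your proposal is correct and follows essentially the same route as the paper's proof: reduce via Lemma \ref{samefp} and Theorem \ref{bsmain} to tameness of $G'$ over $S/G'$, push the $\Sigma$-invariant through the restriction map by Proposition \ref{projsigma}, and use Lemmas \ref{sigmagdash} and \ref{vsp1} to obtain $\Sigma^c_G(G') \cap S^\circ = \{0\}$. You also correctly make explicit a point the paper leaves tacit: tameness of $\Sigma^c_G(G')$ itself (which holds because $G$ is finitely presented) is what rules out the degenerate case $\psi = -\chi \neq 0$ in the passage between ``there exist nonzero $\chi,\psi\in\Sigma^c_G(G')$ with $\chi+\psi\in S^\circ$'' and ``$\Gamma\cap S^\circ\neq\{0\}$''.
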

\begin{proof}  By Lemma \ref{samefp}, we can assume that $S$ contains $G'$. By Theorem \ref{bsmain}, $S$ will be finitely presented if and only if $G'$ is tame as $S/G'$-module. Let $\sigma^*$ denote the natural surjection $G^*\rightarrow S^*$ and let  $G'_S$ denote $G'$ considered as $S/G'$-module. Then, by Proposition \ref{projsigma},  $\Sigma^c(G'_S)=\sigma^*(\Sigma^c(G'))$. Thus $\Sigma_c(G'_S)$ is  {\em not} tame precisely when there exist $\chi, \psi \in \Sigma_c(G')$ so that $\sigma^*(\chi), \sigma^*(\psi) \neq 0$ and $\sigma^*(\chi)+\sigma^*(\psi) = 0$. Noting that $S^{\circ}$ is the kernel of $\sigma^*$, this happens exactly when $\chi,\psi \notin S^{\circ}$ and $\chi+\psi \in S^{\circ}$. But, by Lemma \ref{sigmagdash}, $\chi\in \Sigma^c(G')$ implies $\chi \in G_i^*$ for some $i$. But then, by Lemma \ref{vsp1},  if also $\chi\in S^{\circ}$ then $\chi=0$. Thus $\chi,\psi \notin S^{\circ}$ is equivalent to $\chi, \psi \neq 0$. Thus $\Sigma^c(G'_S)$  is tame if and only if $\Gamma\cap S^{\circ}=\{0\}$.
\end{proof}

\begin{proof}[Proof of Theorem A]
There is little further to do. By Theorems \ref{bg1} and \ref{bg2}, $\Sigma_{G_i}^c(G_i')$ is a rational polyhedron. By Proposition \ref{trdeg} the dimension of this polyhedron is at most $\rank(G_i)/2 +1$. Thus $\Gamma=\Sigma^c_G(G')+\Sigma^c_G(G')$ is a rational polyhedron of dimension $d=\max_{i\neq j}(\rank(G_i)/2+\rank(G_j)/2) +2\le m+2$ (recall that $m$ is the maximum rank of any $G_i$). If $G$ has finite exponent then each $\Delta_v$ is a cone and so $[\Delta]=\Delta$ has dimension at most $m/2$. Thus, in this case, $d\le m$. Hence $\Gamma$ lies in a finite union $\Lambda$ of rational subspaces of dimension at most $d$.

Suppose that $k\ge d$.  The set of points of $\mathcal G^*_k$ which correspond to non-finitely presented groups is contained in the set of points which have non-trivial intersection with at least one of the subspaces in $\Lambda$. By c) of (\ref{Grass}), this is a finite union of proper subvarieties and so a proper subvariety.
\end{proof}

\noindent {\bf Remark.}  Theorem A is true under much wider conditions than assumed in the statement. The key condition we need is that $d=\dim(\Gamma) \le k$. So, for example, if $\Delta_v (G_i')$ is a cone for each $v$ and each $i$ then the conclusion of the theorem holds even though $G'$ may not be of finite exponent.

\section{Proof of Theorem B} 

\begin{proof}[Proof of part (1)] Firstly observe that, by  Lemma \ref{fpvsp}, a subspace $W=S^{\circ}$ of $G^*$ corresponds to a finitely presented virtual subdirect product $S$ exactly when $W\cap \Gamma=\{0\}$. Since $\Gamma$ is a finite union of convex polyhedral cones, the set of subspaces avoiding $\Gamma$ is therefore a finite intersection of sets which each are the set of subspaces avoiding a single convex polyhedral cone. Thus we must show that these latter sets are open.

Assume that an inner product has been chosen on $G^*$.  We can then represent a convex polyhedral cone  $\Delta$ in the form 
$$\Delta=\{x\in G^*: (x,\eta_i) \ge 0\ \text{ for each } i\}$$
for a set $\{\eta_1, \dots, \eta_r\}$ with $\|\eta_i\|=1$.  We aim to show that the set of subspaces $W$ of codimension $k$, which satisfy $W\cap \Delta=\{0\}$, is open. There is nothing to prove if this set is empty so let  $W$ be one such subspace avoiding $\Delta$ and let $U$ be a complement for $W$ in $G^*$. It will suffice to show that the subspaces in $\mathcal N_U$ which avoid $\Delta$ form an open subset of  $\mathcal N_U$. 

We follow the description of  $\mathcal N_U$ in (d) of (1). In particular each subspace has a basis represented by a reduced row-echelon matrix of the form $\begin{bmatrix} I & A \end{bmatrix}$.

We need to quantify the fact that $W$ meets $\Delta$ only in $0$. Denote $N-k$ by $l$ and let $\{w_1,\dots, w_l\}$ by be a basis of $W$. Let $s=(s_1, \dots, s_l)$ represent a point of the unit sphere in $\mathbb R^l$. Then $w=\sum_{i=1}^ls_i w_i$ represents a point of $W$ which, by assumption, does not lie in $\Delta$. The function $f: s\mapsto \min_j(w, \eta_j)$ is clearly a continuous function on the unit sphere taking values in the real numbers. As no non-zero point of $W$ lies in $\Delta$ , there must, for each $w\in W$ be a $j$ so that $(w, \eta_j)<0$. Hence the function $f$ takes only  negative values. As the sphere is compact, $f$ takes a maximum there and this maximum must be negative. In summary, there exists $\epsilon>0$ so that, for each $s$ on the unit sphere there exists $j$ so that, if $w=\sum_{i=1}^ls_i w_i$ then $(w,\eta_j)<-\epsilon$.

We choose a neighbourhood $\mathcal N$ of $W$ in $\mathcal N_U$ so that, if $W'\in \mathcal N$ and $W'$ is represented by the basis $\{w_1', \dots, w_l'\}$, then $\|w'_i-w_i\|<\epsilon/2l$ for each $i$ with $1\le i\le l$. We must show that, if $0\neq w'=\sum_{i=1}^l s_iw'_i\in W'$, then $w' \notin \Delta$. As $W'$ and $\Delta$ are both cones, there is no harm in assuming that $\|(s_1,\dots,s_l)\|=1$ so that $(s_1,\dots,s_l)$ lies on the unit sphere. Choose $j$ so that $(w,\eta_j)<-\epsilon$. Then we have
\begin{eqnarray*} (w', \eta_j) &=&(w, \eta_j)+(w'-w, \eta_j)\le -\epsilon+ \|w'-w\|\cdot \|\eta_j\|=-\epsilon+ \|w'-w\| \\
&\le& -\epsilon+ \sum_{i=1}^l |s_i|\|w'_i-w_i\|
\le -\epsilon+ \sum_{i=1}^l \|w'_i-w_i\| \\
&\le&-\epsilon+l(\epsilon/2l)=-\epsilon/2<0.\end{eqnarray*}

Thus $w' \notin \Delta$ and so $W' \cap \Delta =\{0\}$. Thus every element of $\mathcal N$ avoids $\Delta$ and so the set of elements of $\mathcal N_U$ which avoid $\Delta$ is open, as required.\end{proof}

\begin{proof}[Proof of part (2)] Consider the dense open set $\mathcal N_{G_1^*}$ of complements to $G_1^*$ in $G^*$. Form a basis for $G^*$ by combining a basis for $G_2^*$ with a basis for $G_1^*$ (in that order). The elements of   $\mathcal N_{G_1^*}$ will then be the row-spaces of a matrix of the form $\begin{bmatrix} I & A \end{bmatrix}$. In particular, the case $A=0$ corresponds to $G_2^*$ itself.

A typical element of the subspace $W_A$ corresponding to the matrix $\begin{bmatrix} I & A\end{bmatrix}$ is of the form $w+ \rho_A(w)$ where $w\in G_2^*$ and $\rho_A$ is the linear transformation $G_2^* \rightarrow G_1^*$ corresponding to the matrix $A$. From this, it is easily verified that $W_A$ is also a complement to $G_2^*$ exactly when $A$ is invertible. Thus those subspaces that correspond to virtual subdirect products are all of the form 
$$W_{\rho}=\{w+\rho(w): w\in G_2^* \}$$
where $\rho:G_2^*\longrightarrow G_1^*$ is invertible.

Thus $W_A\cap \Gamma$ is non-zero exactly when $w+ \rho_A(w) \in \Gamma$ for some $0\neq w\in G_2^*$.  Using the definition of $\Gamma$, this means that $w+ \rho_A(w)=\phi+\psi$ for $\phi,\psi \in \Sigma^c_G(G')$. Using Lemma  \ref{sigmagdash} and the fact that each $\Sigma^c_{G_i}(G_i')$ is tame, we see that we must have $w=\phi\in  \Sigma^c_{G_2}(G_2')$ and $\rho_A(w)=\psi\in  \Sigma^c_{G_1}(G_1')$ . Thus $W_A\cap \Gamma$ is non-zero exactly when 
$$\Sigma^c_{G_1}(G_1') \cap \rho_A(\Sigma^c_{G_2}(G_2'))\neq \{0\}.$$

If $G_1=G_2$ then we can take $A$ to be the negative of the identity matrix and then, by Theorem \ref{bsmain},  the condition above is exactly the condition that $G_i'$ be tame as $G_i$-module. As this is guaranteed by the fact that each $G_i$ is finitely presented, the proof is complete in this case.

The case $m=1$  is straightforward and we omit the proof. Suppose now that $m=2$. We must show how, given two tame polyhedral cones $\Delta_i, i=1,2$ in $\mathbb Q^2$, to construct a linear transformation $\rho$ so that $\Delta_1 \cap \rho(\Delta_2)=\{0\}$.

By Lemma \ref{notall}, we can find $v_i$ so that $\left<v_i\right> \cap \Delta_i=\{0\}$. As the set of such $v_i$ is open, we can assume that $v_1 \neq v_2$. Define an inner product on $V$ by taking $\{v_1, v_2\}$ as an orthonormal basis. Because $\Delta_i$ is closed and $v_i\notin \Delta_i$, there is a minimum value taken for the angle between $v_i$ and $\Delta_i$. Thus there exists $\epsilon_i$ so that $0<\epsilon_i<1$ and 
\begin{equation*}\label{largeangle} \left(\frac{(x,v_i)}{\|x\|}\right)^2\le\epsilon_i\ \text{for all }x\in \Delta_i, x\neq 0.\end{equation*}
Choose $\lambda>0$ so that 
\begin{equation}\label{lambdaval}\lambda^4> \frac{\epsilon_1\epsilon_2}{(1-\epsilon_1)(1-\epsilon_2)} .\end{equation} 

Define a linear transformation $\rho$ by $\rho(v_1)=(1/\lambda)v_1; \rho(v_2)=\lambda v_2$.  Our aim is to argue that any point in $\Delta_1$ is at least a minimum angular distance from $v_1$ and hence is mapped, by $\rho$, to an element sufficiently close to $v_2$ so that we know it cannot belong to $\Delta_2$.

Suppose that $0\neq x\in \Delta_1$; say $x=\alpha v_1+\beta v_2$.  Then 
\begin{equation*}\label{xnotin} \left(\frac{(x,v_1)}{\|x\|}\right)^2 \le\epsilon_1;\quad \text{ that is, } \frac{\alpha^2}{\alpha^2+\beta^2}\le \epsilon_1\end{equation*}
which is equivalent to 
\begin{equation}\label{eq1}\alpha^2 \le \left(\frac{\epsilon_1}{1-\epsilon_1}\right)\beta^2\end{equation}
Observe that $\rho(x)=( \alpha/\lambda)v_1+\beta\lambda v_2$. We will know that $\rho(x)\notin \Delta_2$ if 
\begin{equation*}\label{rhoxnotin} \left(\frac{(\rho(x),v_2)}{\|x\|}\right)^2 >\epsilon_2;\quad \text{ that is, }
 \frac{\beta^2\lambda^4}{\frac{alpha^2}{\lambda^2}+\beta^2\lambda^2}> \epsilon_2\end{equation*}
which is equivalent to
\begin{equation}\label{rhoxnotin}\alpha^2<\beta^2\lambda^4\left(\frac{1-\epsilon_2}{\epsilon_2}\right)\end{equation}

If $x\in\Delta_1$ then we have, from (\ref{lambdaval}) and (\ref{eq1}), that 
$$\alpha^2 \le \left(\frac{\epsilon_1}{1-\epsilon_1}\right)\beta^2<\beta^2\lambda^4\left(\frac{1-\epsilon_2}{\epsilon_2}\right)$$
which shows, via (\ref{rhoxnotin}), that $\rho(x)\notin \Delta_2$.

\end{proof}

\begin{proof}[Proof of part (3)] The set $\Sigma^c_{G_i}(G_i')$ is zero precisely when $G_i$ is polycylic. Thus, if two factors $G_i$ are non-polycyclic, then $\Gamma$ will contain a non-zero ray which is not in any $G_i^*$. As long as $k<\rank(G^*)$ we can find  a subspace of codimension $k$ which avoids the factors $G_i^*$ and which contains the given element of $\Gamma$. By Lemma \ref{fpvsp}, the corresponding virtual subdirect product is not finitely presented.
\end{proof}
\begin{proof}[Proof of part (4)] If the conclusion of Theorem A holds then the set of points corresponding to non-finitely presented groups lies in a proper subvariety and so cannot contain a non-empty open subset.

Suppose now that the conclusion of Theorem A fails. Recall the definition of the set $\Lambda$ in Lemma \ref{fpvsp} and let $d$ denote the dimension of $\Lambda$. The comments following the proof of Theorem A show that the conclusion holds if $d\le k$. Thus we may suppose that $d>k\ge m$. Then $\Gamma$ contains a convex polyhedral cone of dimension $d$ and so there is a linearly independent subset $\mathcal B$ of size $d$ so that every positive linear combination of elements of $\mathcal B$ lies in $\Gamma$.  Let $\mathcal B_1$ be a subset of size $k$ in $\mathcal B$ and let $\mathcal B_2$ be its complement in $\mathcal B$. Let $W$ be the subspace spanned by $\mathcal B_1$. Form a basis $\mathcal C$ for $G^*$ which has $\mathcal B_2$ as its first $d-k$ elements and $\mathcal B_1$ as its last $k$ elements. 

Then $\mathcal N_W$ (see (d) of (1) ) can be parametrised by $(N-k)\times N$ matrices of the form $$\begin{bmatrix} I&A\end{bmatrix}$$
The rows are indexed by the elements of $\mathcal C \setminus \mathcal B_1$ and the columns of $A$ are indexed by the elements of $\mathcal B_1$.

The subset of $N_W$ consisting of all those subspaces in which all entries of the matrix $A$ are (strictly) positive is clearly an open subset of $N_U$ and hence also of $\mathcal G(N-k,N)$. Thus it will suffice to show that a point $U_A$ of $N_W$ which corresponds to a virtual subdirect product must correspond to a non-finitely presented one. For that, it suffices to show, by Lemma \ref{fpvsp}, that $U_A$ meets the set $\Lambda$ in a non-zero element of $G^*$. But $U_A$ is the row-space of $[\begin{bmatrix} I&A\end{bmatrix}$ and, in particular, contains the first row. But this first row is the sum of an element of $\mathcal B_2$ and a positive linear combination of the elements of $\mathcal B_1$. In particular, this first row is a positive linear combination of the elements of $\mathcal B$ and so is in $\Gamma\subseteq \Lambda$. Thus if an element of $N_W$ has all elements of $A$ positive and corresponds to a virtual subdirect product $S$ then $S$ is non-finitely presented. The proof is complete.

\end{proof}


\end{document}